\newtheorem{defi}{Definition}[section]
\newtheorem{teo}[defi]{Theorem}
\newtheorem{prop}[defi]{Proposition}
\newtheorem{cor}[defi]{Corollary}
\newtheorem{es}[defi]{Example}
\newtheorem{os}[defi]{Remark}
\begin{document}
\title[Generalized crested products of Markov chains]{Generalized crested products of Markov chains}

\author{Daniele D'Angeli}
\address{Daniele D'Angeli. Department of Mathematics, Technion--Israel Institute of
Technology, Technion City, Haifa 32 000.}
\email{dangeli@tx.technion.ac.il}

\author{Alfredo Donno}
\address{Alfredo Donno. Dipartimento di Matematica ``G. Castelnuovo", Sapienza Università di Roma, Piazzale A. Moro, 2, 00185 Roma, Italia.}
\email{donno@mat.uniroma1.it}

\keywords{Reversible Markov chain, generalized crested product,
Insect Markov chain, spectral theory, Gelfand pairs.}
\date{\today}
\maketitle

\begin{abstract}
We define a finite Markov chain, called generalized crested
product, which naturally appears as a generalization of the first
crested product of Markov chains. A complete spectral analysis is
developed and the $k$-step transition probability is given. It is
important to remark that this Markov chain describes a more
general version of the classical Ehrenfest diffusion model.\\
\indent As a particular case, one gets a generalization of the
classical Insect Markov chain defined on the ultrametric space.
Finally, an interpretation in terms of representation group theory
is given, by showing the correspondence between the spectral
decomposition of the generalized crested product and the Gelfand
pairs associated with the generalized wreath product of
permutation groups.
\end{abstract}

\begin{center}
{\footnotesize{\bf Mathematics Subject Classification (2010):}
60J10, 05C50, 06A07, 20B25.}
\end{center}

\section{Introduction}
This paper deals with the study of a finite Markov chain, called
generalized crested product, which is defined on the product of
finite spaces. The generalized crested product is a generalization
of the Markov chains introduced and studied in \cite{crestednoi}
and \cite{noiortogonal}. More precisely, in \cite{crestednoi} the
first crested product of Markov chains is defined, inspired by the
analogous definition for association schemes \cite{crested}, as a
sort of mixing between the classical crossed and nested products
of Markov chains and it contains, as a particular case, the so
called Insect Markov chain introduced by Figà-Talamanca in
\cite{insetto} in the context of Gelfand pairs theory. In
\cite{noiortogonal} a Markov chain on some special structures
called orthogonal block structures is introduced. If the
orthogonal block structure is a poset block structure, then that
Markov chain can also be defined starting from a finite poset
$(I,\leq)$ and it can be interpreted as a slightly different
generalization of the classical Insect Markov chain and of the
associated Gelfand pairs theory. We noticed that, for some
particular posets, the Markov chain in \cite{noiortogonal} has the
same spectral decomposition as the first crested product of Markov
chains despite the corresponding operators do not coincide. So it
is natural to ask if it is possible to define a Markov chain
containing the first crested product as a particular case, giving
rise to an Insect Markov chain for a particular choice of the
parameters involved. This is the reason why this paper aims at
introducing a new Markov chain which can be seen as a modification
of the Markov chain on the orthogonal block structure and the
natural generalization of the first crested product of Markov
chains. The idea is to take a finite poset $(I,\leq)$ and a family
of Markov operators $P_i$ defined on finite sets $X_i$ indexed by
the elements of the poset. Then we consider the sum, over $I$, of
tensor products of Markov chains reflecting, in some sense, the
poset hierarchy structure (see Definition \ref{basicdefinition}).
A necessary and sufficient condition to have reversibility of the
Markov chain is proven in Theorem \ref{teo2}. In Theorem
\ref{decomposizione}, we give a complete spectral analysis of this
Markov chain and we show in Proposition \ref{vannoacoincidere}
that it coincides with the first crested Markov chain when the
poset $(I,\leq)$ satisfies some particular properties. A formula
for the $k$-step probability is given in Section \ref{3.3}.
Moreover, we introduce in Section \ref{newinsect} an Insect Markov
chain on the product $X=\prod_{i\in I} X_i$, naturally identified
with the last level of a graph $\mathcal{T}$ which is the
generalization of the rooted tree. This Insect Markov chain is
obtained from the generalized crested product of Markov chains for
a particular choice of the operators $P_i$, i.e. $P_i=J_i$, where
$J_i$ is the uniform operator on the set $X_i$. If the poset
$(I,\leq)$ is totally ordered, this Insect Markov chain coincides
with the classical Insect Markov chain \cite{insetto}. In Section
\ref{sectiongelfand} we highlight the correspondence with the
Gelfand pairs theory (for a general theory and applications see
\cite{diaconis}): taking the generalized wreath product of
permutation groups \cite{generalized} associated with $(I,\leq)$
and the stabilizer of an element of $X$ under the action of this
group, one gets a Gelfand pair \cite{noiortogonal}, and the
decomposition of the action of the group on the space $L(X)$ into
irreducible submodules is the same as the spectral decomposition
of the Insect Markov chain associated with $(I,\leq)$. This allows
to study many examples of Gelfand pairs only by using basic tools
of linear algebra.\\ \indent It is important to remark that the
generalized crested product can be seen as a generalization of a
classical diffusion model, the Ehrenfest model, as well as of the
$(C,N)$-Ehrenfest model described in \cite{crestednoi} (see
\cite{saloffexc} and \cite{liggett} for more examples and
details).

\section{Preliminaries}

We recall in this section some basic facts about finite Markov
chains (see, for instance, \cite{librotullio}). Let $X$ be a
finite set, with $|X|= m$. Let $P=(p(x,y))_{x,y\in X}$ be a
stochastic matrix, so that
$$
\sum_{x\in X}p(x_0,x) = 1,
$$
for every $x_0 \in X$. Consider the Markov chain on $X$ with
transition matrix $P$. By abuse of notation, we will denote by $P$
this Markov chain as well as the associated Markov operator.

\begin{defi}
The Markov chain $P$ is reversible if there exists a strict
probability measure $\pi$ on $X$ such that
\begin{eqnarray*}
\pi(x) p(x,y) = \pi(y) p(y,x),
\end{eqnarray*}
for all $x,y \in X$.
\end{defi}
If this is the case, we say that $P$ and $\pi$ are in detailed
balance \cite{aldous}.

Define a scalar product on $L(X) = \{f:X \longrightarrow
\mathbb{C}\}$ as
$$
\langle f_1,f_2 \rangle_{\pi} = \sum_{x\in X}f_1(x)
\overline{f_2(x)}\pi(x),
$$
for all $f_1, f_2\in L(X)$, and the linear operator
$P:L(X)\longrightarrow L(X)$ as
\begin{eqnarray}\label{linearoperator}
(Pf)(x) = \sum_{y\in X}p(x,y)f(y).
\end{eqnarray}
It is easy to verify that $\pi$ and $P$ are in detailed balance if
and only if $P$ is self-adjoint with respect to the scalar product
$\langle \cdot, \cdot \rangle_{\pi}$. Under these hypotheses, it
is known that the matrix $P$ can be diagonalized over the reals.
Moreover, $1$ is always an eigenvalue of $P$ and for any
eigenvalue
$\lambda$ one has $|\lambda|\leq 1$.\\
\indent Let $\{\lambda_z\}_{z\in X}$ be the eigenvalues of the
matrix $P$, with $\lambda_{z_0} = 1$. Then there exists an
invertible unitary real matrix $U = (u(x,y))_{x,y \in X}$ such
that $PU = U\Delta$, where $\Delta = (\lambda_x\delta_x(y))_{x,y
\in X}$ is the diagonal matrix whose entries are the eigenvalues
of $P$. This equation gives, for all $x,z \in X$,
\begin{equation}\label{auto1}
\sum_{y \in X}p(x,y)u(y,z) = u(x,z)\lambda_z.
\end{equation}
Moreover, we have $U^T DU = I$, where $D =
(\pi(x)\delta_x(y))_{x,y \in X}$ is the diagonal matrix of
coefficients of $\pi$. This second equation gives, for all $y,z
\in X$,
\begin{equation}\label{auto2}
\sum_{x \in X}u(x,y)u(x,z)\pi(x) = \delta_y(z).
\end{equation}
It follows from \eqref{auto1} that each column of $U$ is an
eigenvector of $P$, and from \eqref{auto2} that these columns are
orthogonal with respect to the product $\langle \cdot,\cdot
\rangle_{\pi}$.

\begin{prop}
The $k$-th step transition probability is given by
\begin{equation}\label{kpassi}
p^{(k)}(x,y) = \pi(y) \sum_{z \in X}u(x,z)\lambda^{k}_zu(y,z),
\end{equation}
for all $x,y \in X$.
\end{prop}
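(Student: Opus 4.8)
The plan is to diagonalize $P$ explicitly in terms of the matrices $U$, $\Delta$ and $D$ and then raise the resulting factorization to the $k$-th power. First I would note that, since $\pi$ is a strict probability measure, the diagonal matrix $D=(\pi(x)\delta_x(y))_{x,y\in X}$ is invertible; hence the relation $U^T D U = I$ exhibits $U^T D$ as a left inverse of $U$, so that $U$ is invertible and $U^{-1}=U^T D$. Substituting this identity into the eigenvalue equation $PU=U\Delta$ yields the factorization
$$
P = U\Delta U^{-1} = U\Delta U^T D .
$$

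Next I would compute $P^k$ by a one-line induction, using that $U^T D U = I$ absorbs the ``inner'' factors: assuming $P^{k-1}=U\Delta^{k-1}U^T D$, one gets
$$
P^k = \bigl(U\Delta^{k-1}U^T D\bigr)\bigl(U\Delta U^T D\bigr) = U\Delta^{k-1}\bigl(U^T D U\bigr)\Delta U^T D = U\Delta^k U^T D ,
$$
where $\Delta^k=(\lambda_x^k\delta_x(y))_{x,y\in X}$ since $\Delta$ is diagonal. Finally I would read off the $(x,y)$-entry of $U\Delta^k U^T D$: computing $(U^T D)(z,y)=u(y,z)\pi(y)$ because $D$ is diagonal, and then multiplying by $\Delta^k$ and by $U$ on the left, the entry equals $\sum_{z\in X}u(x,z)\lambda_z^k u(y,z)\pi(y)$, which is exactly \eqref{kpassi} after pulling $\pi(y)$ out of the sum.

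The argument is entirely routine bookkeeping with diagonal matrices; the only point deserving a moment's care is the passage from \eqref{auto2} (equivalently $U^T D U = I$) to $U^{-1}=U^T D$, i.e. recognizing that the $\langle\cdot,\cdot\rangle_\pi$-orthonormality of the columns of $U$ is precisely the statement that $U^T D$ inverts $U$, which in turn hinges on the strict positivity of $\pi$. Everything else follows immediately.
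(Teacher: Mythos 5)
Your proposal is correct and follows essentially the same route as the paper: using $U^TDU=I$ to identify $U^{-1}=U^TD$, factorizing $P=U\Delta U^TD$ from $PU=U\Delta$, iterating to get $P^k=U\Delta^k U^TD$, and reading off the $(x,y)$-entry. The only cosmetic difference is that you spell out the induction step and the entrywise computation that the paper leaves implicit.
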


\begin{proof} The proof is a consequence of (\ref{auto1}) and
(\ref{auto2}). In fact, the matrix $U^TD$ is the inverse of $U$,
so that $UU^TD = I$. In formul\ae, we have
$$
\sum_{y \in X}u(x,y)u(z,y) = \frac{1}{\pi(z)}\Delta_z(x).
$$
From the equation $PU = U\Delta$ we get $P = U\Delta U^TD$, which
gives
$$
p(x,y) = \pi(y) \sum_{z \in X}u(x,z)\lambda_zu(y,z).
$$
Iterating this argument we obtain $ P^k = U\Delta^k U^TD$, which
is the assertion.
\end{proof}

Recall that there exists a correspondence between reversible
Markov chains and weighted graphs.

\begin{defi}
A weight on a graph $\mathcal{G}= (X,E)$ is a function $w: X
\times X \longrightarrow [0,+\infty )$ such that
\begin{enumerate}
\item $w(x,y) = w(y,x)$;
\item $w(x,y) > 0$ if and only if $x \sim y$.
\end{enumerate}
\end{defi}

If $\mathcal{G}$ is a weighted graph, it is possible to associate
with $w$ a stochastic matrix $P = (P(x,y))_{x,y \in X}$ on $X$ by
setting
$$
p(x,y) = \frac{w(x,y)}{W(x)},
$$
with $W(x) = \sum_{z \in X}w(x,z)$. The corresponding Markov chain
is called the random walk on $\mathcal{G}$. It is easy to prove
that the matrix $P$ is in detailed balance with the distribution
$\pi$ defined, for every $x \in X$, as
$$
\pi(x) = \frac{W(x)}{W},
$$
with $W = \sum_{z \in X}W(z)$. Moreover, $\pi$ is strictly
positive if $X$ does not contain isolated vertices. The inverse
construction can be done. So, if we have a transition matrix $P$
on $X$ which is in detailed balance with the probability $\pi$,
then we can define a weight $w$ as $w(x,y) = \pi(x)p(x,y)$. This
definition guarantees the symmetry of $w$ and, by setting $E =
\{\{x,y\}: w(x,y) > 0 \}$, we get a weighted graph.\\
\indent There are some important relations between the weighted
graph associated with a transition matrix $P$ and its spectrum
$\sigma(P)$. In fact, it is easy to prove that the multiplicity of
the eigenvalue 1 of $P$ equals the number of connected components
of $\mathcal{G}$. Moreover, the following propositions hold.

\begin{prop}\label{prop5}
Let $\mathcal{G}= (X,E,w)$ be a finite connected weighted graph
and denote $P$ the corresponding transition matrix. Then the
following are equivalent:
\begin{enumerate}
\item $\mathcal{G}$ is bipartite;
\item the spectrum $\sigma (P)$ is symmetric;
\item $-1 \in \sigma (P)$.
\end{enumerate}
\end{prop}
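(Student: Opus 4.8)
The plan is to prove the cycle of implications $(1)\Rightarrow(2)\Rightarrow(3)\Rightarrow(1)$, exploiting the correspondence between the transition matrix $P$ and the symmetric weight $w$ with $p(x,y)=w(x,y)/W(x)$, together with the self-adjointness of $P$ on $L(X)$ with respect to $\langle\cdot,\cdot\rangle_\pi$, where $\pi(x)=W(x)/W$. I would first record the key algebraic observation: $P$ is conjugate, via the diagonal matrix $D^{1/2}=(\sqrt{\pi(x)}\,\delta_x(y))$, to the \emph{symmetric} matrix $S=D^{1/2}PD^{-1/2}$ with entries $s(x,y)=w(x,y)/\sqrt{W(x)W(y)}$, so $P$ and $S$ have the same spectrum and it suffices to analyze $S$.

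For $(1)\Rightarrow(2)$: assume $\mathcal{G}$ is bipartite with parts $X=A\sqcup B$, so every edge joins $A$ to $B$. Define the sign function $\varepsilon\colon X\to\{\pm1\}$ by $\varepsilon(x)=1$ for $x\in A$ and $\varepsilon(x)=-1$ for $x\in B$, and let $E=(\varepsilon(x)\delta_x(y))$ be the corresponding diagonal $\pm1$ matrix. Since $s(x,y)\neq0$ forces $x\sim y$, hence $\varepsilon(x)\varepsilon(y)=-1$, one checks directly that $ESE=-S$. Therefore $S$ is conjugate to $-S$, so $\lambda\in\sigma(S)$ iff $-\lambda\in\sigma(S)$; the same holds for $P$, giving $(2)$. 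The implication $(2)\Rightarrow(3)$ is immediate: by the general theory recalled in the Preliminaries, $1\in\sigma(P)$ (indeed with multiplicity one, since $\mathcal{G}$ is connected), and symmetry of the spectrum then forces $-1\in\sigma(P)$.

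The main work is $(3)\Rightarrow(1)$, and this is the step I expect to be the principal obstacle. Suppose $-1\in\sigma(P)$; since $|\lambda|\le1$ for all eigenvalues, $-1$ is an extreme eigenvalue. Pick a real eigenfunction $f$ with $Pf=-f$, equivalently, in terms of $g=D^{1/2}f$, $Sg=-g$. I would then use a Rayleigh-quotient/Dirichlet-form argument: from $\langle(I+P)f,f\rangle_\pi=0$ expand
\[
\sum_{x,y\in X} w(x,y)\bigl(f(x)+f(y)\bigr)^2 = 0,
\]
which (after the routine manipulation writing $\pi(x)p(x,y)=w(x,y)/W$ and symmetrizing) shows $f(x)+f(y)=0$ whenever $w(x,y)>0$, i.e.\ whenever $x\sim y$. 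Hence $|f|$ is constant on each connected component, so by connectedness $|f|\equiv c>0$, and $f$ takes only the values $\pm c$. Setting $A=\{x: f(x)=c\}$ and $B=\{x: f(x)=-c\}$ gives a partition with every edge running between $A$ and $B$ (by $f(x)+f(y)=0$ on edges), and both parts are nonempty since $f$ is not identically $\pm c$ (otherwise $Pf=f\ne-f$). Thus $\mathcal{G}$ is bipartite. The delicate point to get right is the sign bookkeeping in passing from $\langle(I+P)f,f\rangle_\pi$ to the sum-of-squares identity over edges, and ensuring the conclusion $f(x)+f(y)=0$ is drawn only for pairs with $w(x,y)>0$; once that identity is in hand, connectedness does the rest.
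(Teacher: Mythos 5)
Your argument is correct. Note that the paper itself states Proposition \ref{prop5} without proof, as recalled background from the standard theory of reversible chains and weighted graphs, so there is no internal proof to compare against; your write-up supplies a complete one along the standard lines. The conjugation to the symmetric matrix $S=D^{1/2}PD^{-1/2}$ with $s(x,y)=w(x,y)/\sqrt{W(x)W(y)}$ and the sign-matrix identity $ESE=-S$ correctly give $(1)\Rightarrow(2)$, and $(2)\Rightarrow(3)$ follows since $1\in\sigma(P)$ always. For $(3)\Rightarrow(1)$, the computation indeed works: using $\pi(x)p(x,y)=w(x,y)/W$ and the symmetry of $w$ one gets
$$
0=\langle (I+P)f,f\rangle_\pi=\frac{1}{2W}\sum_{x,y\in X}w(x,y)\bigl(f(x)+f(y)\bigr)^2,
$$
hence $f(x)+f(y)=0$ exactly on pairs with $w(x,y)>0$; connectedness then forces $|f|\equiv c>0$, and your nonemptiness check for the two parts is right (a constant $f$ cannot satisfy $Pf=-f$). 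A small bonus of your identity worth noting: it also excludes loops automatically, since $w(x,x)>0$ would force $f(x)=0$, contradicting $|f|\equiv c>0$, so the conclusion really is bipartiteness in the strict sense.
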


\begin{defi} Let $P$ be a stochastic matrix. $P$ is ergodic if
there exists $n_0 \in \mathbb{N}$ such that
$$
p^{(n_0)}(x,y) > 0, \ \ \ \mbox{for all } x,y \in X.
$$
\end{defi}

\begin{prop}
Let $\mathcal{G} = (X,E)$ be a finite graph. Then the following
conditions are equivalent:
\begin{enumerate}
\item $\mathcal{G}$ is connected and not bipartite;
\item for every weight function on $\mathcal{G}$, the associated transition
matrix $P$ is ergodic.
\end{enumerate}
\end{prop}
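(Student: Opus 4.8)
The plan is to prove the two implications separately, using the spectral facts already recorded: a reversible $P$ has real spectrum contained in $[-1,1]$ with $1$ always an eigenvalue; the multiplicity of the eigenvalue $1$ equals the number of connected components of $\mathcal{G}$; and, by Proposition \ref{prop5}, a connected weighted graph is bipartite if and only if $-1\in\sigma(P)$.

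For $(1)\Rightarrow(2)$ I would fix an arbitrary weight function $w$ on the connected, non-bipartite graph $\mathcal{G}$ and let $P$ and $\pi$ be the associated transition matrix and invariant distribution (strictly positive, since a connected graph has no isolated vertices). Connectedness makes $1$ a simple eigenvalue, and non-bipartiteness together with Proposition \ref{prop5} gives $-1\notin\sigma(P)$; hence every eigenvalue other than $\lambda_{z_0}=1$ satisfies $|\lambda_z|<1$. Since $P$ is stochastic, the eigenspace of $1$ is spanned by the constant function, so the column $u(\cdot,z_0)$ of $U$ is constant, say $u(\cdot,z_0)\equiv c$, and \eqref{auto2} taken with $y=z=z_0$ forces $c^2=1$. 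Plugging this into the $k$-step formula \eqref{kpassi} and letting $k\to\infty$, all the terms with $z\neq z_0$ vanish and one gets $p^{(k)}(x,y)\to\pi(y)\,c^2=\pi(y)>0$ for every $x,y\in X$. Since $X$ is finite, a single $n_0$ then makes $p^{(n_0)}(x,y)>0$ for all pairs simultaneously, i.e. $P$ is ergodic.

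For $(2)\Rightarrow(1)$ I would argue by contraposition: assuming $(1)$ fails, I exhibit weight functions whose transition matrix is not ergodic --- in fact every weight function fails. If $\mathcal{G}$ is disconnected, pick $x,y$ in distinct connected components; no walk joins $x$ to $y$, so $p^{(k)}(x,y)=0$ for all $k$ and all weights, and $P$ is never ergodic. If $\mathcal{G}$ is connected but bipartite, write $X=A\sqcup B$ with every edge joining the two parts (both non-empty, as $\mathcal{G}$ then has at least one edge). For any weight function $p(u,v)>0$ forces $u$ and $v$ to lie in different parts, so a one-line induction on $k$ shows that $p^{(k)}(x,y)=0$ whenever $x,y$ lie in the same part and $k$ is odd, and whenever they lie in different parts and $k$ is even. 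Hence for any $n_0$ there is a pair $(x,y)$ with $p^{(n_0)}(x,y)=0$, so $P$ is not ergodic. This proves $\neg(1)\Rightarrow\neg(2)$.

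The combinatorial steps in $(2)\Rightarrow(1)$ are routine; the only point needing a little care is in $(1)\Rightarrow(2)$, namely correctly identifying the eigenvalue-$1$ eigenspace as the constants and using the normalization \eqref{auto2} to pin the limit of $p^{(k)}(x,y)$ down to exactly $\pi(y)$. (Alternatively one could avoid the spectral input in this direction: connectedness plus the presence of an odd cycle makes the gcd of the lengths of the closed walks at any vertex equal to $1$, and hence $p^{(k)}(x,y)>0$ for all large $k$; but reusing \eqref{kpassi} and Proposition \ref{prop5} keeps the argument shortest.)
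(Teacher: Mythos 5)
The paper does not actually prove this proposition: it is recalled in the Preliminaries as a standard fact (from the general theory of reversible chains, cf.\ \cite{librotullio}), and only its consequence --- that a reversible $P$ is ergodic iff $1$ is a simple eigenvalue and $-1\notin\sigma(P)$ --- is used later. Your argument is correct and self-contained. The implication $(2)\Rightarrow(1)$ is the standard combinatorial one (disconnectedness blocks all walks between components; bipartiteness forces the parity obstruction $p^{(k)}(x,y)=0$ for the wrong parity of $k$), and it is exactly what one would expect here. For $(1)\Rightarrow(2)$ you take the spectral route: simplicity of the eigenvalue $1$ from connectedness, $-1\notin\sigma(P)$ from Proposition \ref{prop5}, identification of the $1$-eigenspace with the constants, normalization of $u(\cdot,z_0)$ via \eqref{auto2}, and then $p^{(k)}(x,y)\to\pi(y)>0$ from \eqref{kpassi}; this is heavier than the elementary alternative you mention in parentheses (an odd closed walk plus connectedness gives aperiodicity, hence positivity of $p^{(k)}$ for large $k$), but it fits the machinery the paper sets up and in effect proves the spectral ergodicity criterion the paper states immediately after the proposition, so nothing is lost. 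The only points worth a half-sentence of care are the degenerate cases (an isolated vertex makes $W(x)=0$, so the random walk is undefined; a one-vertex edgeless graph is bipartite and thus excluded from $(1)$), and the fact that strict positivity of $\pi$ uses the absence of isolated vertices --- both harmless under the paper's standing conventions.
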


So we can conclude that a reversible transition matrix $P$ is
ergodic if and only if the eigenvalue 1 has multiplicity one and
$-1$ is not an eigenvalue. Note that the condition that 1 is an
eigenvalue of $P$ of multiplicity one is equivalent to require
that the probability $P$ is irreducible, according with the
following definition.
\begin{defi}
A stochastic matrix $P$ on a set $X$ is irreducible if, for every
$x_1, x_2 \in X$, there exists $n = n(x_1,x_2)$ such that
$p^{(n)}(x_1,x_2)
>0$.
\end{defi}

\section{Generalized crested product}\label{capitoloprincipale}

\subsection{Definition}\label{3.1}
Let $(I,\leq)$ be a finite poset, with $I=\{1,\ldots, n\}$. The
following definitions are given in \cite{generalized}.
\begin{defi}
A subset $J \subseteq I$ is said
\begin{itemize}
\item {\bf ancestral} if, whenever $i > j$ and $j\in J$, then
$i\in J$; \item {\bf hereditary} if, whenever $i < j$ and $j\in
J$, then $i\in J$; \item {\bf a chain} if, whenever $i, j\in J$,
then either $i\leq j$ or $j\leq i$; \item {\bf an antichain} if,
whenever $i, j\in J$ and $i \neq j$, then neither $i\leq j$ nor
$j\leq i$.
\end{itemize}
\end{defi}

Given an element $i\in I$, we set $A(i)=\{j\in I \ : j>i\}$ to be
the ancestral set of $i$ and $A[i]=A(i)\sqcup\{i\}$. Analogously
we set $H(i)=\{j\in I \ : j<i\}$ to be the hereditary set of $i$
and $H[i]=H(i)\sqcup\{i\}$. For a subset $J\subseteq I$ we put
$A(J)=\bigcup_{j\in J}A(j)$, $A[J]=\bigcup_{j\in J}A[j]$,
$H(J)=\bigcup_{j\in J}H(j)$ and $H[J]=\bigcup_{j\in J}H[j]$.
Moreover, we denote by $\mathbb{S}$ the set of the antichains of
$(I,\leq)$ and we set $\overline{S}=\{j \in I \ : A(j)=\emptyset
\}$. It is clear that $\overline{S}$ and the empty set $\emptyset$
belong to $\mathbb{S}$. Note that $A(\emptyset)=H(\emptyset)=
A[\emptyset] = H[\emptyset] =\emptyset$.

For each $i\in I$, let $X_i$ be a finite set, with $|X_i|=m_i$, so
that we can identify $X_i$ with the set $\{0,1, \ldots, m_i-1\}$.
Moreover, let $P_i$ be an irreducible Markov chain on $X_i$ and
let $p_i$ be the corresponding transition probability. We also
denote by $P_i$ the associated Markov operator
$P_i:L(X_i)\longrightarrow L(X_i)$ defined as in
\eqref{linearoperator}. Let $I_i$ be the identity matrix of size
$m_i$ and set:
$$
J_i = \frac{1}{m_i} \begin{pmatrix}
  1 & 1 & \cdots & 1 \\
  1 & \ddots &  & \vdots \\
  \vdots &  & \ddots & \vdots \\
  1 & \cdots & \cdots & 1
\end{pmatrix}.
$$
We also denote by $I_i$ and $J_i$ the associated Markov operator
on $L(X_i)$, that we call the identity and the uniform operator,
respectively. We are going to define a Markov chain on the set
$X=X_1\times \cdots \times X_n$.

\begin{defi}\label{basicdefinition}
Let $(I,\leq)$ be a finite poset and let $\{p_i^0\}_{i\in I}$ be a
probability distribution on $I$, i.e. $p_i^0>0$ for every $i\in I$
and $\sum_{i=1}^np_i^0=1$. The \textbf{generalized crested
product} of the Markov chains $P_i$ defined by $(I,\leq)$ and
$\{p_i^0\}_{i\in I}$ is the Markov chain on $X$ whose associated
Markov operator is
\begin{eqnarray}\label{definizioneiniziale}
\mathcal{P}=\sum_{i\in I}p_i^0 \left(P_i\otimes
\left(\bigotimes_{j\in H(i)} U_j\right)\otimes
\left(\bigotimes_{j\in I\setminus H[i]} I_j\right)\right).
\end{eqnarray}
\end{defi}

\begin{os}\rm
The generalized crested product can be seen as a generalization of
the classical diffusion Ehrenfest model. This classical model
consists of two urns numbered 0, 1 and $n$ balls numbered $1,
\ldots, n$. A configuration is given by a placement of the balls
into the urns. Note that there is no ordering inside the urns. At
each step, a ball is randomly chosen (with probability $1/n$) and
it is moved to the other urn. In \cite{crestednoi} we generalized
it to the $(C,N)$-Ehrenfest model. Now put $|X_i|=m$, for each
$i=1,\ldots, n$: then we have the following interpretation of the
generalized crested product. Suppose that we have $n$ balls
numbered by $1, \ldots, n$ and $m$ urns. Let $(I,\leq)$ be a
finite poset with $n$ elements. At each step, we choose a ball $i$
according with a probability distribution $p_i^0$: then we move it
to another urn following a transition probability $P_i$ and all
the other balls numbered by indices $j$ such that $j\leq i$ in the
poset $(I,\leq)$ are moved uniformly to a new urn. The balls
corresponding to all the other indices are not moved.
\end{os}
From now on, we suppose that each $P_i$ is in detailed balance
with the probability measure $\sigma_i$.
\begin{teo}\label{teo2}
The generalized crested product is reversible if and only if $P_k$
is symmetric for every $k \in I \setminus \overline{S}$, i.e.
$p_k(x_k,y_k) = p_k(y_k,x_k)$, for all $x_k,y_k\in X_k$. If this
is the case, $\mathcal{P}$ is in detailed balance with the strict
probability measure $\pi$ on $X$ given by
$$
\pi(x_1,\ldots,x_n) = \frac{\prod_{i\in
\overline{S}}\sigma_i(x_i)}{\prod_{i\in
I\setminus\overline{S}}m_i}.
$$
\end{teo}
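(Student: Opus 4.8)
The plan is to reduce the reversibility of $\mathcal{P}$ to the reversibility of each summand and then analyze when a common $\pi$ works. Recall from the Preliminaries that a Markov operator $P$ is in detailed balance with a strict probability $\pi$ precisely when $P$ is self-adjoint on $L(X)$ with respect to $\langle\cdot,\cdot\rangle_\pi$. So I would first write out the detailed balance equation $\pi(x)\,p(x,y)=\pi(y)\,p(y,x)$ for $\mathcal{P}$ using \eqref{definizioneiniziale}, where $x=(x_1,\dots,x_n)$, $y=(y_1,\dots,y_n)$. Each summand indexed by $i$ contributes a product over coordinates: the $i$-th coordinate carries $p_i(x_i,y_i)$, the coordinates $j\in H(i)$ carry $\frac{1}{m_j}$ (the entries of the uniform operator $J_j$, writing $U_j$ for it), and the coordinates $j\in I\setminus H[i]$ carry $\delta_{x_j}(y_j)$. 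The key observation is that for the summand indexed by $i$ to be nonzero we need $x_j=y_j$ for all $j\notin H[i]$, i.e. $x$ and $y$ agree outside the hereditary closure of $i$.

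Next I would verify the ``if'' direction with the proposed $\pi$. Plug the candidate $\pi(x_1,\dots,x_n)=\frac{\prod_{i\in\overline{S}}\sigma_i(x_i)}{\prod_{i\in I\setminus\overline{S}}m_i}$ into the detailed balance identity, summand by summand. For a fixed $i$, on the support where $x_j=y_j$ off $H[i]$, the ratio $\pi(x)/\pi(y)$ only sees the coordinates in $H[i]$ — and since $\overline{S}$ consists of maximal elements (those with empty ancestral set), one checks that $H[i]\cap\overline{S}$ is either empty or just $\{i\}$ according to whether $i\in\overline{S}$. If $i\in\overline{S}$, then $H(i)\subseteq I\setminus\overline{S}$, so $\pi(x)/\pi(y)=\sigma_i(x_i)/\sigma_i(y_i)$, and the detailed balance of the $i$-th summand reduces to $\sigma_i(x_i)p_i(x_i,y_i)=\sigma_i(y_i)p_i(y_i,x_i)$, which holds because $P_i$ is in detailed balance with $\sigma_i$; the uniform factors $1/m_j$ on $H(i)$ are symmetric in $x,y$ and cause no trouble. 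If $i\notin\overline{S}$, then $i\in H[i]$ with $i\notin\overline{S}$, and (using $A(i)\subseteq I\setminus\overline S$ as well) one finds $\pi(x)/\pi(y)=1$, so detailed balance of that summand becomes $p_i(x_i,y_i)=p_i(y_i,x_i)$, i.e. symmetry of $P_i$, which is exactly the hypothesis for $k=i\in I\setminus\overline{S}$. Summing over $i$ gives detailed balance for $\mathcal{P}$. One should also note $\pi$ is strict since each $\sigma_i$ is strict and each $m_j$ finite.

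For the ``only if'' direction, suppose $\mathcal{P}$ is reversible with some strict $\pi'$; I want to force each $P_k$, $k\notin\overline{S}$, to be symmetric. The idea is to isolate the $k$-th summand by choosing $x,y$ that differ only in coordinates forcing a single term to survive: take $x,y$ with $x_k\neq y_k$ but $x_j=y_j$ for all $j\neq k$. Then in \eqref{definizioneiniziale} the summand indexed by $i$ is nonzero only if $k\in H[i]$ and $x_j=y_j$ for $j\in H[i]\setminus\{k\}$ — the latter is automatic here, so the surviving terms are those $i$ with $k\le i$. Comparing $p^{(1)}(x,y)$ and $p^{(1)}(y,x)$ and using detailed balance with $\pi'$, the uniform factors again contribute symmetrically, the $\delta$-factors are trivially symmetric, and one is left with a relation among the $p_i(x_k,y_k)$ for $i\ge k$ weighted by the (strictly positive) $p_i^0$ and by ratios of $\pi'$; a careful choice — e.g. first take $k$ maximal among indices $\ge k$ other than those forced, or induct downward on the poset so that only the single term $i=k$ is genuinely unconstrained — extracts $p_k(x_k,y_k)=p_k(y_k,x_k)$. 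This bookkeeping, disentangling the several summands $i\ge k$ and handling the $U_j$ factors when $j$ ranges over $H(i)$ with $k\in H(i)$ for $i>k$, is where the main work lies: the hard part is making the ``isolate one summand'' argument rigorous when the poset has several elements above $k$, which I expect to handle by an induction on $|A(k)|$ (equivalently on the height of $k$) starting from maximal elements, where $A(k)=\emptyset$ puts $k\in\overline{S}$ and no condition is needed.
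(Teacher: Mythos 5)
Your ``if'' direction is fine and is essentially the paper's own computation, just organized summand by summand instead of by cases on which coordinates of $x,y$ differ: on the support of the $i$-th summand $x$ and $y$ agree off $H[i]$, and the facts you actually use -- $H(i)\cap\overline{S}=\emptyset$ always, hence $\pi(x)/\pi(y)=\sigma_i(x_i)/\sigma_i(y_i)$ for $i\in\overline{S}$ and $\pi(x)=\pi(y)$ for $i\notin\overline{S}$ -- are correct. (The parenthetical claim $A(i)\subseteq I\setminus\overline{S}$ is false whenever $A(i)\neq\emptyset$, since $A(i)$ always contains a maximal element, but you never need it.)

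The ``only if'' direction has a genuine gap. Reversibility only gives detailed balance with respect to some unknown strict measure $\pi'$, and your isolation argument does not eliminate $\pi'$. With your choice of $x,y$ differing only in the coordinate $k\in I\setminus\overline{S}$, the surviving summands with $i>k$ are diagonal terms $p_i^0\,p_i(x_i,x_i)\prod_{j\in H(i)}m_j^{-1}$, identical at $(x,y)$ and $(y,x)$ -- so there is nothing to ``disentangle'' there and no induction on $|A(k)|$ is needed for that -- and what detailed balance gives is
$$
\pi'(x)\Bigl(C+\tfrac{p_k^0}{\prod_{j\in H(k)}m_j}\,p_k(x_k,y_k)\Bigr)
=\pi'(y)\Bigl(C+\tfrac{p_k^0}{\prod_{j\in H(k)}m_j}\,p_k(y_k,x_k)\Bigr),
$$
with $C\geq 0$ the common diagonal contribution. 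Symmetry of $P_k$ does not follow from this unless you already know $\pi'(x)=\pi'(y)$, and that is exactly the step your plan omits: one must first prove that any reversing measure is independent of the coordinates indexed by $I\setminus\overline{S}$. The paper does this (equations \eqref{rapporti1}--\eqref{rapporti2}) by comparing pairs that differ in a maximal coordinate $i\in\overline{S}$ and possibly in coordinates of $H(i)$: for such pairs only the $i$-th summand survives, irreducibility of $P_i$ provides pairs with $p_i(x_i,y_i)\neq 0$, one gets $\pi'(x)/\pi'(y)=\sigma_i(x_i)/\sigma_i(y_i)$ independently of the $H(i)$-coordinates, hence $\pi'$ does not depend on them; since $\bigcup_{i\in\overline{S}}H(i)=I\setminus\overline{S}$, this pins $\pi'$ down enough for your final step to go through. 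Your proposed downward induction starting from the maximal elements does not address this, because the obstruction is the unknown ratio $\pi'(x)/\pi'(y)$, not interference from the summands above $k$; and the maximal elements, far from requiring ``no condition,'' are precisely where the information about $\pi'$ has to be extracted.
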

\begin{proof}
We start by proving that the condition $\sigma_k=\frac{1}{m_k}$,
for each $k\in I\setminus \overline{S}$, is sufficient. Consider
two elements $x = (x_1, \ldots, x_n)$ and $y = (y_1, \ldots, y_n)$
in $X$. First, suppose that there exists $j\in \overline{S}$ such
that $x_j\neq y_j$ and $x_i=y_i$ for all $i\in
\overline{S}\setminus \{j\}$. In this case, we have
\begin{eqnarray*}
\pi(x)\mathcal{P}(x,y)&=&\frac{\prod_{i\in
\overline{S}}\sigma_i(x_i)}{\prod_{i\in I\setminus
\overline{S}}m_i}\cdot
p_j^0\cdot\frac{p_j(x_j,y_j)}{\prod_{i\in H(j)}m_i}\cdot\prod_{i\in I\setminus H[j]}\delta_i(x_i,y_i)\\
&=&\sigma_j(x_j)p_j(x_j,y_j)\cdot p_j^0\cdot\frac{\prod_{i\in
\overline{S}\setminus \{j\}}\sigma_i(x_i)}{\prod_{i\in I\setminus
\overline{S}}m_i}\cdot \frac{\prod_{i\in I\setminus H[j]}\delta_i(x_i,y_i)}{\prod_{i\in H(j)}m_i}\\
&=&\sigma_j(y_j)p_j(y_j,x_j)\cdot p_j^0\cdot\frac{\prod_{i\in
\overline{S}\setminus \{j\}}\sigma_i(y_i)}{\prod_{i\in I\setminus
\overline{S}}m_i}\cdot \frac{\prod_{i\in I\setminus H[j]}\delta_i(y_i,x_i)}{\prod_{i\in H(j)}m_i}\\
&=&\frac{\prod_{i\in \overline{S}}\sigma_i(y_i)}{\prod_{i\in
I\setminus \overline{S}}m_i}\cdot p_j^0\cdot
\frac{p_j(y_j,x_j)}{\prod_{i\in
H(j)}m_i}\cdot\prod_{i\in I\setminus H[j]}\delta_i(y_i,x_i)\\
&=&\pi(y)\mathcal{P}(y,x).
\end{eqnarray*}
If we suppose that there exist $j_1,j_2\in \overline{S}$ such that
$x_{j_h}\neq y_{j_h}$, for $h=1,2$, then
$\mathcal{P}(x,y)=\mathcal{P}(y,x)=0$ and there is nothing to
prove.

Suppose now that $x_i=y_i$ for every $i\in \overline{S}$ and there
is $j\in I\setminus \overline{S}$ such that $x_j\neq y_j$. We have
\begin{eqnarray*}
\pi(x)\mathcal{P}(x,y)&=&\frac{\prod_{i\in
\overline{S}}\sigma_i(x_i)}{\prod_{i\in I\setminus
\overline{S}}m_i}\cdot \left(\sum_{i\in A(j)}
p_i^0\cdot\frac{p_i(x_i,y_i)\prod_{h\in I\setminus H[i]} \delta_h(x_h,y_h)}{\prod_{h\in H(i)}m_h} \right.\\
&+& \left.p_j^0\cdot\frac{p_j(x_j,y_j)\prod_{h\in I\setminus H[j]}\delta_h(x_h,y_h)}{\prod_{h\in H(j)}m_h}\right)\\
&=&\frac{\prod_{i\in \overline{S}}\sigma_i(y_i)}{\prod_{i\in
I\setminus \overline{S}}m_i}\cdot \left(\sum_{i\in A(j)}
p_i^0\cdot\frac{p_i(y_i,x_i)\prod_{h\in I\setminus H[i]}
\delta_h(y_h,x_h)}{\prod_{h\in H(i)}m_h}\right.\\
&+&\left. p_j^0\cdot\frac{p_j(y_j,x_j)\prod_{h\in I\setminus H[j]}\delta_h(y_h,x_h)}{\prod_{h\in H(j)}m_h}\right)\\
 &=&\pi(y)\mathcal{P}(y,x).
\end{eqnarray*}
On the other hand, we show that the condition
$\sigma_k=\frac{1}{m_k}$, for each $k\in I\setminus \overline{S}$,
is necessary.  Suppose that the equality $\pi(x)\mathcal{P}(x,y) =
\pi(y)\mathcal{P}(y,x)$ holds for all $x,y\in X$. Let $i\in
\overline{S}$: by irreducibility, we can choose $x,y\in X$ such
that $x_i\neq y_i$ and $p_i(x_i,y_i)\neq 0$. Let $x_j=y_j$ for
every $j\in \overline{S}\setminus \{i\}$. We have
$$
\pi(x)\mathcal{P}(x,y) = \pi(y)\mathcal{P}(y,x)
\Longleftrightarrow \pi(x)p_{i}(x_{i},y_{i}) =
\pi(y)p_{i}(y_{i},x_{i}).
$$
This gives
\begin{eqnarray}\label{rapporti1}
\frac{\pi(x)}{\pi(y)} = \frac{p_{i}(y_{i}, x_{i})}{p_{i}(x_{i},
y_{i})} = \frac{\sigma_{i}(x_{i})}{\sigma_{i}(y_{i})}.
\end{eqnarray}
Let $\overline{x}\in X$ such that $\overline{x}_j=y_j$ for each
$j\in H(i)$ and $\overline{x}_j=x_j$ for each $j\in I\setminus
H(i)$. Proceeding as above we get
\begin{eqnarray}\label{rapporti2}
\frac{\pi(\overline{x})}{\pi(y)} = \frac{p_{i}(y_{i},
x_{i})}{p_{i}(x_{i}, y_{i})} =
\frac{\sigma_{i}(x_{i})}{\sigma_{i}(y_{i})},
\end{eqnarray}
so that \eqref{rapporti1} and \eqref{rapporti2} imply
$\pi(x)=\pi(\overline{x})$, i.e. $\pi$ does not depend on the
coordinates corresponding to indices in $H(i)$. Let $j\in H(i)$
and let $x'\in X$ such that $ x'_h\neq x_h$ for each $h\in H[j]$
and $x'_k=x_k$ for each $k\in I\setminus H[j]$. The condition
$\pi(x)\mathcal{P}(x,x') = \pi(x')\mathcal{P}(x',x)$ reduces to
\begin{eqnarray}\label{reduced}
\mathcal{P}(x,x') = \mathcal{P}(x',x),
\end{eqnarray}
since $x$ and $x'$ differ only for indices in $H(i)$ and so
$\pi(x)=\pi(x')$. Observe that $x_{\ell} = x_{\ell}'$ for each
$\ell\in I\setminus H[j]$ and so the summands corresponding to
these indices are equal in both sides of \eqref{reduced}.
Moreover, for each $k\in H(j)$, one has $j\in I\setminus H[k]$ and
so the summands corresponding to these indices are 0 in both sides
of \eqref{reduced}, since $x_j'\neq x_j$. Hence, \eqref{reduced}
reduces to $p_j(x_j,x_j')=p_j(x_j',x_j)$, what implies
$\sigma_j(x_j) = \sigma_j(x_j')$ and so the hypothesis of
irreducibility guarantees that $\sigma_j$ is uniform on $X_j$.
This completes the proof.
\end{proof}

\subsection{Spectral analysis}\label{3.2}
The next step is to study the spectral decomposition of the
operator $\mathcal{P}$. Suppose that
$L(X_i)=\bigoplus_{j_i=0}^{r_i}V^{i}_{j_i}$ is the decomposition
of $L(X_i)$ into eigenspaces of $P_i$ and that $\lambda_{j_i}$ is
the eigenvalue corresponding to $V^i_{j_i}$. The eigenspace
$V_0^i$ is the space of the constant functions over
$X_i$: under our hypothesis of irreducibility, we have $\dim(V_0^i)=1$.\\
\indent For every antichain $S = \{i_1,\ldots, i_k \}\in
\mathbb{S}$, define
$$
\mathcal{J}_S:=\{\underline{j}=(j_{i_1},\ldots, j_{i_k})\ : \
j_{i_h}\in\{1,\ldots, r_{i_h}\}\}
$$
and, for $S\in \mathbb{S}$ and $\underline{j}\in \mathcal{J}_S$,
we put
$$
V_{S,\underline{j}}:=V^{i_1}_{j_{i_1}}\otimes\cdots\otimes
V^{i_k}_{j_{i_k}}.
$$
Moreover, we set
\begin{eqnarray}\label{autospazihaifa}
W_{S,\underline{j}}:=V_{S,\underline{j}}\otimes
\left(\bigotimes_{i\in A(S)}
L(X_i)\right)\otimes\left(\bigotimes_{i\in I\setminus A[S]} V^i_0
\right)
\end{eqnarray}
and
\begin{eqnarray}\label{formula7}
\lambda_{S,\underline{j}}=\sum_{h=1}^k
p_{i_h}^0\lambda_{j_{i_h}}+\sum_{i\in I\setminus A[S]}p_i^0.
\end{eqnarray}

\begin{teo}\label{decomposizione}
Let $(I,\leq)$ be a finite poset and let $\mathcal{P}$ be the
generalized crested product defined in
\eqref{definizioneiniziale}. The decomposition of $L(X)$ into
eigenspaces for $\mathcal{P}$ is
$$
L(X)=\bigoplus_{S\in \mathbb{S}}\left( \bigoplus_{\underline{j}\in
\mathcal{J}_S }  W_{S,\underline{j}}\right).
$$
Moreover, the eigenvalue associated with $W_{S,\underline{j}}$ is
$\lambda_{S,\underline{j}}$.
\end{teo}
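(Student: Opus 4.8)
The plan is to verify directly that each $W_{S,\underline{j}}$ is contained in an eigenspace of $\mathcal{P}$ with eigenvalue $\lambda_{S,\underline{j}}$, and then to check that the dimensions add up to $|X|$, so that the listed subspaces must give the full decomposition. First I would recall that $\mathcal{P}=\sum_{i\in I}p_i^0\,\mathcal{P}_i$, where $\mathcal{P}_i=P_i\otimes(\bigotimes_{j\in H(i)}J_j)\otimes(\bigotimes_{j\in I\setminus H[i]}I_j)$, and that a tensor product $\bigotimes_{i\in I}A_i$ acts on a tensor product of eigenvectors $\bigotimes_{i\in I}v_i$ (with $A_iv_i=\mu_iv_i$) by the scalar $\prod_i\mu_i$. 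So it suffices to take a pure tensor $w=\bigotimes_{i\in I}w_i$ lying in $W_{S,\underline{j}}$ — meaning $w_{i_h}\in V^{i_h}_{j_{i_h}}$ for the elements $i_h$ of the antichain $S$, $w_i\in L(X_i)$ arbitrary for $i\in A(S)$, and $w_i\in V^i_0$ (constants) for $i\in I\setminus A[S]$ — and compute $\mathcal{P}_iw$ for each $i\in I$.

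The key case analysis is according to the position of the chosen index $i\in I$ relative to the antichain $S$. I would organize it as follows. (i) If $i\in I\setminus A[S]$, then for every $j\in S$ we have $j\notin H[i]$ (since $j\not< i$ and $j\neq i$), so the $S$-coordinates are hit by identity operators; the coordinates in $A(S)$ are hit either by $J_j$ or $I_j$, but in either case, hmm — here I need $J_j$ applied to an arbitrary function to land back in $W_{S,\underline{j}}$, which it does because $J_jL(X_j)=V^j_0\subseteq L(X_j)$; and $P_iw_i=\lambda_0 w_i=w_i$ since $w_i$ is constant. Wait, that is not quite enough for a clean eigenvalue statement on pure tensors, so the cleaner route is: $\mathcal{P}_i$ maps $W_{S,\underline{j}}$ into itself, and on the \emph{subspace} $W_{S,\underline{j}}$ it acts as a scalar — this needs checking but follows because on each tensor factor the relevant operator ($P_i$ restricted to $V^i_0$, or $J_j$ or $I_j$) acts as a scalar on the piece of $W_{S,\underline{j}}$ sitting in that factor, \emph{except} possibly on the $A(S)$-factors where $J_j$ is not scalar on all of $L(X_j)$. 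To handle this I would instead decompose each $L(X_j)$ for $j\in A(S)$ into its own $P_j$-eigenspaces and refine the sum, so that every $W_{S,\underline{j}}$ is itself a tensor product of $P_i$- or $J_i$- or $I_i$-eigenspaces in each coordinate; then $J_j$ is scalar ($1$ on $V^j_0$, $0$ elsewhere) but the summand with a nonconstant $A(S)$-factor is killed by those $\mathcal{P}_i$ with $j\in H(i)$. Actually the honest fix is: $W_{S,\underline{j}}$ as written is $\mathcal{P}$-invariant and $\mathcal{P}$ is scalar on it; to see the scalar I pick the pure tensor whose $A(S)$-factors are all constants — it lies in $W_{S,\underline{j}}$, and its $\mathcal{P}$-eigenvalue, computed factor by factor, must be the scalar.

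So the computation of the eigenvalue on such a pure tensor $w$ (with all $A(S)\cup(I\setminus A[S])$ factors constant, and $w_{i_h}\in V^{i_h}_{j_{i_h}}$) splits over $i\in I$: (a) if $i=i_h\in S$, then $H(i_h)$ contains no element of $S$ (antichain) and no element of $A(S)$, and $P_{i_h}w_{i_h}=\lambda_{j_{i_h}}w_{i_h}$ while $J_j,I_j$ act as the identity on the constant factors, contributing $p_{i_h}^0\lambda_{j_{i_h}}$; (b) if $i\in A(S)$, then there is some $i_h\in S$ with $i_h< i$, so $i_h\in H(i)$ and the factor at $i_h$ is hit by $J_{i_h}$, which sends $w_{i_h}\in V^{i_h}_{j_{i_h}}$ with $j_{i_h}\geq 1$ to $0$ — so these terms contribute $0$; (c) if $i\in I\setminus A[S]$, then as noted $H(i)\cap S=\emptyset$, so the $S$-factors get $I_{i_h}$, the other factors are constants fixed by $P_i,J_j,I_j$, giving contribution $p_i^0$. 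Summing gives exactly $\sum_h p_{i_h}^0\lambda_{j_{i_h}}+\sum_{i\in I\setminus A[S]}p_i^0=\lambda_{S,\underline{j}}$, as in \eqref{formula7}. The invariance of $W_{S,\underline{j}}$ under each $\mathcal{P}_i$ is the routine check (identity and $I_j$ preserve everything; $J_j$ sends $L(X_j)\to V^j_0\subseteq L(X_j)$ and fixes $V^j_0$; $P_i$ preserves each of $L(X_i)$, $V^i_0$, and $V^i_{j_i}$).

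Finally, for the dimension count: $\dim W_{S,\underline{j}}=\prod_{h=1}^k\dim V^{i_h}_{j_{i_h}}\cdot\prod_{i\in A(S)}m_i$ (using $\dim V^i_0=1$). Summing over $\underline{j}\in\mathcal{J}_S$ replaces $\prod_h\dim V^{i_h}_{j_{i_h}}$ (with each $j_{i_h}\geq 1$) by $\prod_h(m_{i_h}-1)$, and then summing over all antichains $S\in\mathbb{S}$ one shows $\sum_{S\in\mathbb{S}}\prod_{i\in S}(m_i-1)\prod_{i\in A(S)}m_i=\prod_{i\in I}m_i=|X|$. I expect this combinatorial identity to be the main obstacle. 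The natural way to prove it is by induction on a linear extension of the poset, peeling off a maximal element $i_0$ (so $A(i_0)=\emptyset$): every antichain either omits $i_0$ or contains $i_0$ (in which case it is $\{i_0\}$ together with an antichain of $I\setminus H[i_0]$ disjoint from $H(i_0)$, and $A(S)$ picks up nothing new from $i_0$ but the pieces must be reorganized carefully), and matching the two contributions against $m_{i_0}\cdot\prod_{i\in I\setminus\{i_0\}}m_i$ closes the induction. Alternatively — and perhaps more cleanly — one can argue that the $W_{S,\underline{j}}$ are pairwise orthogonal (distinct antichains or distinct $\underline{j}$ force, in some coordinate, orthogonal $P_i$-eigenspaces or the pair $V^i_0$ versus $\bigoplus_{j\geq 1}V^i_j$), together with the fact that we have exhibited enough mutually invariant subspaces whose span is $\mathcal{P}$-invariant and on which $\mathcal{P}$ is diagonalizable; since $\mathcal{P}$ is self-adjoint (under the hypotheses inherited from Theorem \ref{teo2}, or at least diagonalizable as a real combination of commuting normal operators) its eigenspaces are spanned by the $W_{S,\underline{j}}$, and the orthogonal direct sum must then be all of $L(X)$ by comparing with the known spectral decomposition — but to avoid circularity I would still carry out the dimension identity. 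I would present the invariance and eigenvalue computation first, then the dimension identity as a lemma proved by the linear-extension induction.
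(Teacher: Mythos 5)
Your overall route is the paper's: the eigenvalue computation rests on the same case analysis according to whether $i\in S$, $i\in A(S)$ or $i\in I\setminus A[S]$, with the same two key facts ($H(i)\subseteq I\setminus A[S]$ in the first and third cases; existence of $k\in S\cap H(i)$ whose uniform factor kills the term in the second). The genuine gap is the step you call the ``honest fix'': invariance of $W_{S,\underline{j}}$ under $\mathcal{P}$, together with the eigenvalue computed on the \emph{single} pure tensor whose $A(S)$-factors are constant, does not show that $\mathcal{P}$ acts as a scalar on all of $W_{S,\underline{j}}$ --- an invariant subspace of a diagonalizable operator can split into several eigenvalues, so ``the scalar'' you invoke is exactly what still has to be proved, and your cases (a)--(c) are explicitly restricted to that special tensor. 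The fix you sketched and then set aside (refining the $A(S)$-factors into $P_j$-eigenspaces, or equivalently computing $\mathcal{P}\varphi$ for an \emph{arbitrary} elementary tensor $\varphi_1\otimes\cdots\otimes\varphi_n\in W_{S,\underline{j}}$) is the right one and is what the paper does: in every non-vanishing term ($i\in S$ or $i\in I\setminus A[S]$) the coordinates in $A(S)$ are touched only by identity operators, because $H(i)\subseteq I\setminus A[S]$ and $A(S)\subseteq I\setminus H[i]$, so the scalar is independent of the $A(S)$-factors; and every term with $i\in A(S)$ vanishes identically on $W_{S,\underline{j}}$ via the factor $J_k$ at some $k\in S\cap H(i)$, using $\sum_{x_k}\varphi_k(x_k)=0$. (That last annihilation, which you and the paper both use, tacitly requires $\sigma_k$ to be uniform for non-maximal $k$, i.e.\ the reversibility condition of Theorem \ref{teo2}; otherwise nontrivial $P_k$-eigenvectors need not have zero sum and $W_{S,\underline{j}}$ need not be an eigenspace.)

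On completeness, your fear that the dimension identity is ``the main obstacle'' is misplaced. Writing $L(X)=\bigoplus_{(j_1,\ldots,j_n)}V^1_{j_1}\otimes\cdots\otimes V^n_{j_n}$, each summand lies in exactly one $W_{S,\underline{j}}$, namely the one with $S$ the set of minimal elements of $T=\{i: j_i\geq 1\}$ (the condition $S\subseteq T\subseteq A[S]$ determines $S$ uniquely), which yields the direct-sum decomposition of $L(X)$ immediately; equivalently, $\prod_{i\in I}m_i=\sum_{T\subseteq I}\prod_{i\in T}(m_i-1)$ grouped by the antichain $\min(T)$ gives your counting identity with no induction on linear extensions. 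Note that the paper does not carry out this count inside the proof of Theorem \ref{decomposizione}; it obtains maximality later from the rank computation in Proposition \ref{matriciautovettori}, so including the short argument above is a reasonable addition rather than a deviation.
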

\begin{proof}
Let $\varphi$ be a function in $W_{S,\underline{j}}$. We can
represent $\varphi$ as the tensor product
$\varphi_1\otimes\cdots\otimes \varphi_n$, with $\varphi_i\in
V^i_{j_i}$ if $i\in S$, $\varphi_i\in L(X_i)$ if $i\in A(S)$, and
$\varphi_i\in V^i_0$ if $i\in I\setminus A[S]$. We have to show
that $(\mathcal{P}\varphi)(x)=\lambda_{S,\underline{j}}\varphi(x)$
for every $x=(x_1,\ldots, x_n)\in X$. One has:
\begin{eqnarray*}
(\mathcal{P}\varphi)(x)&=& \sum_{(y_1,\ldots, y_n)\in X}\sum_{i\in
I} p_i^0\left(p_i(x_i,y_i)\varphi_i(y_i)\frac{\prod_{j\in
I\setminus
H[i]}\delta_j(x_j,y_j)}{\prod_{j\in H(i)}m_j}\prod_{j\neq i}\varphi_j(y_j) \right)\\
&=&\sum_{i\in I}p_i^0\sum_{y_j:\ j\in
H[i]}\frac{p_i(x_i,y_i)\varphi_i(y_i)}{\prod_{j\in
H(i)}m_j}\prod_{j\in I\setminus H[i]}\varphi_j(x_j)\prod_{j\in H(i)}\varphi_j(y_j).\\
\end{eqnarray*}
Observe that, if $i\in S$, then $H(i)\subseteq I\setminus A[S]$
and so $\varphi_j$ is constant for every $j\in H(i)$. Suppose
$i=i_h$, for some $h\in\{1,\ldots, k\}$. Hence, the term of
$\mathcal{P}$ corresponding to $i_h$ is
\begin{eqnarray*}
p_{i_h}^0\prod_{j\in
H(i_h)}m_j\cdot\sum_{y_{i_h}}\frac{p_{i_h}(x_{i_h},y_{i_h})\varphi_{i_h}(y_{i_h})}{\prod_{j\in
H(i_h)}m_j}\prod_{j\neq
i_h}\varphi_j(x_j)=(p_{i_h}^0\lambda_{j_{i_h}})\varphi(x).
\end{eqnarray*}
On the other hand, if $i\in I\setminus A[S]$, then $S\subseteq
I\setminus H[i]$ and so the identity operator $I_j$, for $j\in S$,
acts on the space orthogonal to $V_0^j$ and $P_i$ acts on $V^i_0$.
Note that $H(i)\subseteq I\setminus A[S]$, so that the term
corresponding to the index $i$ is
\begin{eqnarray*}
& &p_{i}^0\prod_{j\in
H(i)}m_j\cdot\sum_{y_{i}}\frac{p_{i}(x_{i},y_{i})\varphi_{i}(y_{i})}{\prod_{j\in
H(i)}m_j}\prod_{j\neq i}\varphi_j(x_j)\\
&=& p_{i}^0\prod_{j\in
H(i)}m_j\cdot\frac{\varphi_{i}(y_{i})}{\prod_{j\in
H(i)}m_j}\prod_{j\neq i}\varphi_j(x_j)=p_i^0\varphi(x).\\
\end{eqnarray*}
Finally, if $i\in A(S)$, then there exists $k\in S$ such that
$k\in H(i)$. In particular, $\varphi_k$ is orthogonal to $V_0^k$,
i.e. $\sum_{y_k\in X_k}\varphi_k(y_k)=0$ and so the term
corresponding to the index $i$ is
\begin{eqnarray*}
& & p_i^0\sum_{y_j:\ j\in
H[i]}\frac{p_i(x_i,y_i)\varphi_i(y_i)}{\prod_{j\in
H(i)}m_j}\prod_{j\in I\setminus H[i]}\varphi_j(x_j)\prod_{j\in
H(i)}\varphi_j(y_j) \\
&=& p_i^0\left(\sum_{y_j:\ k\neq j\in
H[i]}\frac{p_i(x_i,y_i)\varphi_i(y_i)}{\prod_{k\neq j\in
H(i)}m_j}\prod_{j\in I\setminus H[i]}\varphi_j(x_j)\prod_{k\neq
j\in H(i)}\varphi_j(y_j)\right)\cdot \frac{1}{m_k}\sum_{y_k\in
X_k}\varphi_k(y_k)\\
&=& 0.
\end{eqnarray*}
Hence
$$
(\mathcal{P}\varphi)(x)=\left(\sum_{h=1}^k
p_{i_h}^0\lambda_{j_{i_h}}+\sum_{i\in I\setminus
A[S]}p_i^0\right)\varphi(x).
$$
and the claim is proven.
\end{proof}

\begin{cor}
If $P_i$ is ergodic for each $i\in I$, then $\mathcal{P}$ is
ergodic.
\end{cor}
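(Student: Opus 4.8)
The plan is to combine the spectral description furnished by Theorem~\ref{decomposizione} with a direct connectivity argument. First I would use \eqref{formula7} together with the ergodicity of the factors $P_i$ to show that $1$ is a simple eigenvalue of $\mathcal{P}$ and that every other eigenvalue has modulus strictly less than $1$ (in particular $-1\notin\sigma(\mathcal{P})$); then I would check by hand that $\mathcal{P}$ is irreducible. Since an irreducible non-negative matrix whose only eigenvalue of maximal modulus equals its spectral radius is primitive — equivalently, irreducible and aperiodic, hence ergodic — this finishes the proof.

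For the spectral step, recall from Theorem~\ref{decomposizione} that the eigenvalues of $\mathcal{P}$ are exactly the numbers $\lambda_{S,\underline{j}}$, $S\in\mathbb{S}$, $\underline{j}\in\mathcal{J}_S$, with eigenspace $W_{S,\underline{j}}$. The empty antichain $S=\emptyset$ yields $\lambda_\emptyset=\sum_{i\in I}p_i^0=1$ with eigenspace $W_\emptyset=\bigotimes_{i\in I}V_0^i$, which is one-dimensional because $\dim V_0^i=1$ for every $i$. If instead $S=\{i_1,\dots,i_k\}\neq\emptyset$ and $\underline{j}=(j_{i_1},\dots,j_{i_k})\in\mathcal{J}_S$, then each $j_{i_h}\geq 1$, so $\lambda_{j_{i_h}}$ is an eigenvalue of $P_{i_h}$ distinct from its (simple) Perron eigenvalue $1$; since $P_{i_h}$ is ergodic this forces $|\lambda_{j_{i_h}}|<1$. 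As $S\subseteq A[S]$, the sets $S$ and $I\setminus A[S]$ are disjoint subsets of $I$, and \eqref{formula7} gives
$$
|\lambda_{S,\underline{j}}|\leq\sum_{h=1}^{k}p_{i_h}^0\,|\lambda_{j_{i_h}}|+\sum_{i\in I\setminus A[S]}p_i^0<\sum_{h=1}^{k}p_{i_h}^0+\sum_{i\in I\setminus A[S]}p_i^0\leq\sum_{i\in I}p_i^0=1 .
$$
Hence $1$ is a simple eigenvalue of $\mathcal{P}$ and all its other eigenvalues lie strictly inside the unit disc.

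For irreducibility, after relabelling we may assume that $1<2<\cdots<n$ is a linear extension of $(I,\leq)$, so that $H(i)\subseteq\{1,\dots,i-1\}$ and $\{i+1,\dots,n\}\subseteq I\setminus H[i]$ for every $i$. Fix $x,y\in X$. From \eqref{definizioneiniziale}, the $i$-th summand shows that, with positive probability, a single step of $\mathcal{P}$ moves coordinate $i$ one step along the transition graph of $P_i$, sends each coordinate in $H(i)$ to an arbitrary value (the operator acting there has all entries positive, so in particular it may leave the coordinate unchanged), and fixes every coordinate in $I\setminus H[i]$. Correcting coordinates in the order $i=n,n-1,\dots,1$: when the index $i$ is treated, the coordinates $i+1,\dots,n$ already carry the values $y_{i+1},\dots,y_n$ and, lying in $I\setminus H[i]$, are not disturbed; using irreducibility of $P_i$ we drive the current value of coordinate $i$ to $y_i$ along a positive-probability path in the transition graph of $P_i$, keeping the coordinates of $H(i)$ fixed. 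After the step $i=1$ the configuration equals $y$, and the whole trajectory has positive probability, so $\mathcal{P}$ is irreducible.

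Putting the two steps together, $\mathcal{P}$ is an irreducible non-negative matrix whose unique eigenvalue of maximal modulus is $1$; equivalently, it is irreducible and aperiodic, hence ergodic. The one delicate point to keep in mind is that the hypotheses do not force $\mathcal{P}$ to be reversible, so the spectral information alone does not suffice (it excludes periodicity but not the presence of transient states), which is precisely why the explicit path construction for irreducibility cannot be bypassed; the bookkeeping there reduces to the single observation that acting on the index $i$ of a linear extension never disturbs the coordinates above $i$.
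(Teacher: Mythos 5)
Your proof is correct, but it follows a genuinely different (and more self-contained) route than the paper's. The paper's own proof is a one-liner: it reads off from \eqref{formula7} that the eigenvalue $1$ has multiplicity one and that $-1$ is never attained, and then implicitly invokes the criterion stated in the Preliminaries, namely that a \emph{reversible} transition matrix is ergodic if and only if $1$ is simple and $-1$ is not an eigenvalue. You instead prove the sharper spectral statement that every eigenvalue $\lambda_{S,\underline{j}}$ with $S\neq\emptyset$ satisfies $|\lambda_{S,\underline{j}}|<1$ (your estimate is correct: $S\cap(I\setminus A[S])=\emptyset$, $p^0_{i_h}>0$, and ergodicity of $P_{i_h}$ gives $|\lambda_{j_{i_h}}|<1$), and you supplement it with an explicit positive-probability path argument, built on a linear extension of $(I,\leq)$ and the fact that the $i$-th summand of \eqref{definizioneiniziale} never disturbs coordinates outside $H[i]$, to get irreducibility; Perron--Frobenius then gives primitivity, hence ergodicity. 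What your extra work buys is independence from reversibility of $\mathcal{P}$: the corollary as stated does not assume the symmetry condition of Theorem \ref{teo2}, and, as you rightly note, for a non-reversible (even diagonalizable) stochastic matrix the spectral condition ``$1$ simple, no other eigenvalue of modulus $1$'' does not by itself exclude transient states, so the irreducibility step cannot be skipped; in this respect your argument is more complete than the paper's, which is fully rigorous only when the reversibility hypothesis is in force. The only cosmetic quibble is the phrase ``whose only eigenvalue of maximal modulus equals its spectral radius'', which should read ``which has exactly one eigenvalue of modulus equal to its spectral radius''; the argument you actually run is the standard one and is sound.
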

\begin{proof}
The expression of the eigenvalues of $\mathcal{P}$ given in
\eqref{formula7} ensures that the eigenvalue 1 is obtained with
multiplicity one and the eigenvalue $-1$ can never be obtained.
\end{proof}

We are able now to provide the matrices $U, D$ and $\Delta$
associated with $\mathcal{P}$. For every $i$, let $U_i$, $D_i$ and
$\Delta_i$ be the matrices of eigenvectors, of the coefficients of
$\sigma_i$ and of eigenvalues for the probability $P_i$,
respectively. Recall the identification of $X_i$ with the set
$\{0,1,\ldots,m_i-1\}$.

\begin{prop}\label{matriciautovettori} The matrices $U,D$ and $\Delta$ have the following
form:
\begin{itemize}
  \item $U=\sum_{S\in \mathbb{S}}\left(\bigotimes_{i\in S}(U_i-A_i)\right)\otimes\left(\bigotimes_{i\in
  I\setminus A[S]}A_i\right)\otimes\left(\bigotimes_{i\in
  A(S)}I_i^{\sigma_i-norm}\right)$, where
$$I_i^{\sigma_i-norm} = \begin{pmatrix}
  \frac{1}{\sqrt{\sigma_i(0)}} &  &  &  \\
   & \frac{1}{\sqrt{\sigma_i(1)}} &  &  \\
   &  & \ddots &  \\
   &  &  & \frac{1}{\sqrt{\sigma_i(m_i-1)}}
\end{pmatrix}.
$$
By $A_i$ we denote the matrix of size $m_i$ whose entries on the
first column are all 1 and the remaining ones are 0.
\item $D=\bigotimes_{i\in I}D_i$.
\item $\Delta= \sum_{i\in I}p_i^0\Delta_i\otimes \left(\bigotimes_{j\in I\setminus H[i]}
  I_j\right)\otimes \left(\bigotimes_{j\in  H(i)}
  J_j^{diag}\right)$, where $J_j^{diag}$ is the diagonal matrix $\begin{pmatrix}
    1 &  &  &  \\
     & 0 &  &  \\
     &  & \ddots &  \\
     &  &  & 0 \
  \end{pmatrix}$ of size $m_j$.
\end{itemize}
\end{prop}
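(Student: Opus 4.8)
The plan is to treat the three formulas separately, with the real work going into $U$, and to reduce everything to two facts about a single factor $i$: under irreducibility $V_0^i$ is spanned by the constant function $\mathbf 1_i$, which is already $\sigma_i$-unit because $\langle\mathbf 1_i,\mathbf 1_i\rangle_{\sigma_i}=\sum_x\sigma_i(x)=1$, so the first column of $U_i$ is exactly the unique nonzero column of $A_i$; and the eigenvalue attached to $V_0^i$ is $1$, so $\Delta_i$ carries a $1$ in its top-left entry. From these I would extract the one-coordinate identities $U_i^TD_iU_i=I_i$, $A_i^TD_iA_i=U_i^TD_iA_i=A_i^TD_iU_i=J_i^{diag}$, and $(I_i^{\sigma_i-norm})^TD_iI_i^{\sigma_i-norm}=I_i$ (hence $(U_i-A_i)^TD_i(U_i-A_i)=I_i-J_i^{diag}$), together with the action relations $P_iA_i=A_i=A_i\Delta_i$, $J_iA_i=A_i$, $A_iJ_i^{diag}=A_i$, $J_i(U_i-A_i)=0$, $(U_i-A_i)J_i^{diag}=0$ and $P_i(U_i-A_i)=(U_i-A_i)\Delta_i$, where $J_i$ denotes the uniform operator on $X_i$.

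For $D$ there is essentially nothing to do: under the standing reversibility hypothesis, Theorem \ref{teo2} forces $\sigma_k=1/m_k$ for $k\in I\setminus\overline S$, so $\pi(x_1,\dots,x_n)=\prod_{i\in I}\sigma_i(x_i)$, which is precisely $D=\bigotimes_iD_i$; the same factorization identifies $\langle\cdot,\cdot\rangle_\pi$ on $L(X)=\bigotimes_iL(X_i)$ with the tensor product of the $\langle\cdot,\cdot\rangle_{\sigma_i}$, which legitimizes the tensor bookkeeping. For $U$, write $U=\sum_{S\in\mathbb S}N_S$ with $N_S$ the $S$-th tensor summand in the statement. Tensoring the identities above, $N_S^TDN_S$ is a diagonal matrix of $0$s and $1$s, and its nonzero columns (those picking a nonconstant eigenvector of $P_i$ for $i\in S$, the constant one for $i\in I\setminus A[S]$, and an arbitrary basis vector for $i\in A(S)$) form a $\pi$-orthonormal basis of $\bigoplus_{\underline j\in\mathcal J_S}W_{S,\underline j}$. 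For $S\neq S'$ one must show $N_S^TDN_{S'}=0$; this is the poset input, identical to the one in the proof of Theorem \ref{decomposizione}: one checks $S\cap(I\setminus A[S'])\neq\emptyset$ or $S'\cap(I\setminus A[S])\neq\emptyset$, and in the corresponding coordinate $\ell$ the matching factor is $(U_\ell-A_\ell)^TD_\ell A_\ell=J_\ell^{diag}-J_\ell^{diag}=0$. Since by Theorem \ref{decomposizione} the subspaces $W_{S,\underline j}$ exhaust $L(X)$, summing over $S$ gives $U^TDU=I$; being real, $U$ is then invertible and $\langle\cdot,\cdot\rangle_\pi$-orthogonal.

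It remains to verify $\mathcal PU=U\Delta$; since $\Delta$ is a sum of tensor products of diagonal matrices it is diagonal, so this step simultaneously certifies it as the eigenvalue matrix. Expand $\mathcal PU=\sum_{i\in I}\sum_{S\in\mathbb S}p_i^0\,M_iN_S$, where $M_i$ acts as $P_i$ in slot $i$, as the uniform operator in the slots indexed by $H(i)$, and as the identity elsewhere, and compare slot by slot with $N_S\widehat\Delta_i$, where $\widehat\Delta_i=\Delta_i\otimes\bigotimes_{j\in I\setminus H[i]}I_j\otimes\bigotimes_{j\in H(i)}J_j^{diag}$ is the $i$-th tensor summand of $\Delta$. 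Splitting into the regions $i\in S$, $i\in I\setminus A[S]$, $i\in A(S)$ and using the observation $H(i)\subseteq I\setminus A[S]$ (valid in the first two regions, and already used in the proof of Theorem \ref{decomposizione}), the action relations give $M_iN_S=N_S\widehat\Delta_i$ at once when $i\in S$ or $i\in I\setminus A[S]$; when $i\in A(S)$ one has $S\cap H(i)\neq\emptyset$, and in any coordinate $k\in S\cap H(i)$ one meets $J_k(U_k-A_k)=0$ on the left and $(U_k-A_k)J_k^{diag}=0$ on the right, so $M_iN_S=0=N_S\widehat\Delta_i$. Summing over $i$ and $S$ yields $\mathcal PU=\sum_SN_S\Delta=U\Delta$.

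The step I expect to be the main obstacle is this last one: keeping the nested tensor slots and the three-region analysis straight, and in particular making sure the matching $M_iN_S=N_S\widehat\Delta_i$ never fails one-sidedly — whenever the naive factorwise matching breaks (precisely when $i\in A(S)$, and there only through a coordinate $k\in S$ lying below $i$) both sides must collapse to $0$ simultaneously. The orthogonality computation carries a smaller combinatorial obstacle of the same flavour, namely the antichain lemma ($S\neq S'$ forces $S\cap(I\setminus A[S'])\neq\emptyset$ or $S'\cap(I\setminus A[S])\neq\emptyset$) used above, which is exactly the ingredient that makes the decomposition of Theorem \ref{decomposizione} a genuine direct sum.
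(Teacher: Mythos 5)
Your proposal is correct, and its combinatorial kernel is the same as the paper's: the antichain fact that $S\neq S'$ forces $S\cap(I\setminus A[S'])\neq\emptyset$ or $S'\cap(I\setminus A[S])\neq\emptyset$, and the observation $H(i)\subseteq I\setminus A[S]$ for $i\in S$ or $i\in I\setminus A[S]$, are exactly the ingredients the paper uses. The packaging differs, though. The paper treats the columns of $U$ as eigenvectors ``by construction'' (each nonzero column of $\mathcal{M}^S$ lies in some $W_{S,\underline{j}}$, so Theorem \ref{decomposizione} applies) and then only proves that $U$ has maximal rank, via disjointness of the nonzero-column supports of the $\mathcal{M}^S$ and the count $1+\sum_{S\neq\emptyset}\prod_{j\in S}(m_j-1)\prod_{j\in A(S)}m_j=\prod_{j\in I}m_j$; the $\Delta$ statement is justified only by the substitution recipe $P_i\mapsto\Delta_i$, $J_i\mapsto J_i^{diag}$. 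You instead verify the two defining relations $U^TDU=I$ and $\mathcal{P}U=U\Delta$ directly from one-coordinate identities such as $(U_\ell-A_\ell)^TD_\ell A_\ell=0$, $P_i(U_i-A_i)=(U_i-A_i)\Delta_i$, $J_k(U_k-A_k)=0=(U_k-A_k)J_k^{diag}$. This buys a genuine proof of the $\Delta$ formula and of the orthonormality of $U$ (which the paper asserts only through orthogonality of the eigenspaces), at the cost of more tensor bookkeeping; the paper's route is shorter because it leans entirely on Theorem \ref{decomposizione}. Two small points you should make explicit: the identity $J_k(U_k-A_k)=0$ needs $\sigma_k$ uniform, which is guaranteed by Theorem \ref{teo2} exactly for $k\in I\setminus\overline{S}$, i.e.\ for every coordinate $k\in H(i)$ where you actually invoke it; and in the step ``summing over $S$ gives $U^TDU=I$'' you should note that the supports of the $0$--$1$ diagonal matrices $N_S^TDN_S$ are pairwise disjoint (a column in two supports would contradict the same antichain lemma), since the dimension count alone only matches traces. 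Neither point is a gap, just a line each to add.
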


\begin{proof}
Let us start by proving the statement for the matrix $U$. By
construction, each column of $U$ is an eigenvector of
$\mathcal{P}$. Let us show that the rank of $U$ is maximal. Fix
$S\in \mathbb{S}$. Then the matrix
\begin{eqnarray}\label{matricedec}
\mathcal{M}^S:=\left(\bigotimes_{i\in
S}(U_i-A_i)\right)\otimes\left(\bigotimes_{i\in
  I\setminus A[S]}A_i\right)\otimes\left(\bigotimes_{i\in A(S)}I_i^{\sigma_i-norm}\right)
\end{eqnarray}
has rank $\prod_{j\in S}(m_j-1)\prod_{j\in A(S)}m_j$ if $S\neq
\emptyset$. If $S=\emptyset$, then $\mathcal{M}^S = \otimes_{i\in
I} A_i$ has rank 1. Moreover, eigenvectors arising from different
$S$ are independent because they belong to subspaces of $L(X)$
which are orthogonal with respect to the scalar product $\langle
\cdot, \cdot \rangle_{\pi}$.\\
\indent First, let us show that, if $S\neq S'$, then the sets of
indices corresponding to the non zero columns of $\mathcal{M}^S$
and $\mathcal{M}^{S'}$ are disjoint. Note that $S\neq S'$ implies
$I\setminus A[S]\neq I\setminus A[S']$. Hence, we can assume
without loss of generality that there exists $h\in I\setminus
A[S]$ such that either $h\in S'$ or $h\in A(S')$. Suppose $h\in
S'$, and put $\mathcal{M}^S_k: =\bigotimes_{j\geq k}M^S_j$, where
$M^S_j=U_j-A_j,A_j$ or $I_j^{\sigma_j-norm}$ according with
(\ref{matricedec}). Under our assumption $M^S_h=A_h$ and
$M^{S'}_h=U_h-A_h$, so that our claim is true for
$\mathcal{M}^S_h$ and $\mathcal{M}^{S'}_h$. Then the same property
can be deduced for $\mathcal{M}^S$ and $\mathcal{M}^{S'}$. Suppose
now that $h\in A(S')$. Then there is $h'\in S'$ such that $h\in
A(h')$. We claim that $h'\in I\setminus A[S]$. In fact if $h'\in
S$ then $h\in A(S)$, which is absurd. If $h'\in A(S)$ then $h\in
A(S)$, a contradiction again. Hence, there exists an index $h'\in
S'$ such that $h'\in I\setminus A[S]$ and from \eqref{matricedec}
we deduce that the claim is true for
$\mathcal{M}^S$ and $\mathcal{M}^{S'}$.\\
\indent Hence, we deduce from Theorem \ref{decomposizione} that
the rank of $U$ is $1+\sum_{S\neq \emptyset} \prod_{j\in
S}(m_j-1)\prod_{j\in A(S)}m_j=\prod_{j\in I}m_j$ and so it is
maximal.

In order to get the diagonal matrix $D$, whose entries are the
coefficients of $\pi$, it suffices to consider the tensor product
of the corresponding matrices associated with the probability
$P_i$, for every $i = 1,\ldots, n$.\\ \indent Finally, to get the
matrix $\Delta$ of eigenvalues of $\mathcal{P}$ it suffices to
replace, in the expression of the matrix $\mathcal{P}$, the matrix
$P_i$ by $\Delta_i$ and the matrix $J_i$ by the corresponding
diagonal matrix $J^{diag}_i$.
\end{proof}

\subsection{The case of the first crested product}

In \cite{crestednoi} the definition of the first crested product
of Markov chains is given. More precisely, considering the product
$X_1 \times \cdots \times X_n$ and a partition
\begin{eqnarray}\label{partition}
\{1, \ldots, n\} = C \coprod N
\end{eqnarray}
of the set $\{1, \ldots, n\}$, given a probability distribution
$\{p_i^0\}_{i=1}^n$ on $\{1, \ldots, n\}$, the first crested
product of the Markov chains $P_i$ with respect to the partition
\eqref{partition} is defined as the Markov chain on $X_1 \times
\cdots \times X_n$ whose transition matrix is
\begin{eqnarray*}
P &=& \sum_{i \in C}p_i^0 \left(I_1 \otimes \cdots \otimes
I_{i-1}\otimes P_i \otimes I_{i+1}\otimes \cdots \otimes I_n
\right) \\ &+& \sum_{i\in N}p_i^0 \left( I_1 \otimes \cdots
\otimes I_{i-1}\otimes P_i \otimes J_{i+1} \otimes \cdots \otimes
J_n \right).
\end{eqnarray*}
We want to show in this section that, if the poset $(I,\leq)$
satisfies some special conditions, then the generalized crested
product defined in \eqref{definizioneiniziale} reduces to the
first crested product. We denote by $\preceq$ the usual ordering
of natural numbers.

\begin{prop}\label{vannoacoincidere}
Suppose that $(I,\leq)$ satisfies the following property: given
$i$ such that $H(i)\neq \emptyset$, then $j\in H(i)$ if and only
if $i\prec j$. Then the first crested product of Markov chains is
obtained by the operator defined in (\ref{definizioneiniziale}) by
putting:
$$
N = \{i \ : \ H(i)\neq \emptyset\} \qquad \mbox{and } \qquad C =
\{1,\ldots, n\}\setminus N.
$$
\end{prop}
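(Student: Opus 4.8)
The plan is to prove the statement by unwinding both definitions and checking that the operator in \eqref{definizioneiniziale} and the first crested product coincide \emph{term by term}. Both operators are sums indexed by $i\in I=\{1,\ldots,n\}$ with the same coefficients $p_i^0$, and the prescribed partition $\{1,\ldots,n\}=C\sqcup N$ merely sorts these indices into two groups; so it suffices to verify, for each fixed $i\in I$, that the $i$-th summand of $\mathcal{P}$ equals the corresponding summand of the first crested product — the one coming from the sum over $C$ when $i\in C$, and from the sum over $N$ when $i\in N$.

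First I would dispose of the case $i\in C$, i.e.\ $H(i)=\emptyset$. Then $H[i]=\{i\}$, hence $I\setminus H[i]=\{1,\ldots,i-1,i+1,\ldots,n\}$, so the $i$-th summand of \eqref{definizioneiniziale} is $p_i^0\bigl(P_i\otimes\bigotimes_{j\neq i}I_j\bigr)$. Re-ordering the tensor factors according to the natural order of $\{1,\ldots,n\}$, this is exactly $p_i^0\bigl(I_1\otimes\cdots\otimes I_{i-1}\otimes P_i\otimes I_{i+1}\otimes\cdots\otimes I_n\bigr)$, which is the $i$-th term in the ``crossed'' part $\sum_{i\in C}$ of the first crested product.

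Next, for $i\in N$, i.e.\ $H(i)\neq\emptyset$, I would invoke the hypothesis on $(I,\leq)$: since $H(i)\neq\emptyset$ it gives $j\in H(i)\Longleftrightarrow i\prec j$, so $H(i)=\{j\in I:\ i\prec j\}=\{i+1,i+2,\ldots,n\}$, whence $H[i]=\{i,i+1,\ldots,n\}$ and $I\setminus H[i]=\{1,\ldots,i-1\}$. Substituting into \eqref{definizioneiniziale}, the $i$-th summand becomes $p_i^0\bigl(P_i\otimes\bigotimes_{j=i+1}^{n}J_j\otimes\bigotimes_{j=1}^{i-1}I_j\bigr)$, and re-ordering the tensor factors in the natural order yields $p_i^0\bigl(I_1\otimes\cdots\otimes I_{i-1}\otimes P_i\otimes J_{i+1}\otimes\cdots\otimes J_n\bigr)$, i.e.\ the $i$-th term in the ``nested'' part $\sum_{i\in N}$ of the first crested product (here the uniform operators $J_j$ are precisely those appearing in \eqref{definizioneiniziale}).

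Summing these equalities over $i\in I=C\sqcup N$ then shows that $\mathcal{P}$ equals the first crested product associated with the partition $\{1,\ldots,n\}=C\sqcup N$ and the same distribution $\{p_i^0\}_{i=1}^n$, which is the assertion. The only delicate point — and the step I would treat most carefully — is the tensor-slot bookkeeping: one must check that, for $i\in N$, the hypothesis forces the coordinates carrying a uniform operator to be \emph{exactly} $\{i+1,\ldots,n\}$ (that is, $H(i)$ is the ``upper interval'' of $\{1,\ldots,n\}$ starting just above $i$), so that the factored product $P_i\otimes\bigl(\bigotimes_{j\in H(i)}J_j\bigr)\otimes\bigl(\bigotimes_{j\in I\setminus H[i]}I_j\bigr)$ reassembles, in the natural order, into $I_1\otimes\cdots\otimes I_{i-1}\otimes P_i\otimes J_{i+1}\otimes\cdots\otimes J_n$. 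No spectral or estimative arguments are needed; the proposition is a direct comparison of the two definitions.
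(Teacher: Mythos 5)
Your proposal is correct and follows essentially the same route as the paper: under the hypothesis, $H(i)=\emptyset$ for $i\in C$ and $H(i)=\{i+1,\ldots,n\}$ for $i\in N$, so the $i$-th summand of \eqref{definizioneiniziale} reassembles into the crossed term $I_1\otimes\cdots\otimes P_i\otimes\cdots\otimes I_n$ or the nested term $I_1\otimes\cdots\otimes I_{i-1}\otimes P_i\otimes J_{i+1}\otimes\cdots\otimes J_n$, and summing over $i$ gives the first crested product. The paper's proof does exactly this (somewhat more tersely) and then adds an optional discussion matching the antichains to the two types of eigenspaces of \cite{crestednoi}, which is not needed for the operator identity you were asked to prove.
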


\begin{proof} The partition $\{1,\ldots,n\} = C\sqcup
N$, with
$$
N = \{i \ : \ H(i)\neq \emptyset\} \qquad \mbox{and } \qquad C =
\{1,\ldots, n\}\setminus N,
$$
gives:
\begin{eqnarray*}
\mathcal{P} &=& \sum_{I\in C}p_i^0I_1\otimes\cdots\otimes I_{i-1}
\otimes P_i\otimes I_{i+1}\cdots \otimes I_n\\ &+& \sum_{I\in
N}p_i^0I_1\otimes\cdots\otimes I_{i-1} \otimes P_i\otimes
J_{i+1}\cdots \otimes J_n
\end{eqnarray*}
and this operator coincides with the first crested product
associated with the partition $\{1,\ldots,n\}=C\sqcup N$. Using
the same notations as in \cite{crestednoi}, we can denote by $i_1$
the minimal element in $N$ with respect to the ordering $\preceq$
of $\{1,\ldots, n\}$. The antichains of the poset $(I,\leq)$ in
this case are:
\begin{itemize}
\item the empty set $\emptyset$;
\item the set $\{i\}$, for every $i \in \{1, \ldots, n\}$;
\item the subsets of $C$;
\item the sets $D\coprod \{i\}$, with $D\subseteq C$ and $i\in
N$ such that $d \prec i$ for every $d\in D$.
\end{itemize}
The eigenspaces associated with antichains which are subsets of
the set $\{1,2,\ldots, i_1\}$ are exactly the eigenspaces of
second type described in Theorem 4.3 of \cite{crestednoi}. All the
other antichains yield eigenspaces of first type.
\end{proof}

\begin{es}\rm Consider the following diagram:
\begin{center}
\begin{picture}(400,30)
\letvertex A=(185,30)\letvertex B=(185,0)
\letvertex C=(215,0)\letvertex D=(215,30)
\drawvertex(A){$\bullet$}\drawvertex(B){$\bullet$}
\drawvertex(C){$\bullet$}\drawvertex(D){$\bullet$}
\drawundirectededge(A,B){}\drawundirectededge(A,C){}\drawundirectededge(C,D){}
\end{picture}
\end{center}
and put on its vertices the labelling
\begin{center}
\begin{picture}(400,30)
\letvertex A=(185,30)\letvertex B=(185,0)
\letvertex C=(215,0)\letvertex D=(215,30)
\drawvertex(A){$\bullet$}\drawvertex(B){$\bullet$}
\drawvertex(C){$\bullet$}\drawvertex(D){$\bullet$}
\put(177,25){1}\put(177,-2){2}\put(218,25){3}\put(218,-2){4}
\drawundirectededge(A,B){}\drawundirectededge(A,C){}\drawundirectededge(C,D){}
\end{picture}
\end{center}
This defines a poset $(I,\leq)$, with $I=\{1,2,3,4\}$,
$H(1)=\{2,4\}$, $H(2)=H(4)=\emptyset$, $H(3)=\{4\}$, whose
associated generalized crested product is (see
\eqref{definizioneiniziale})
$$
\mathcal{P} = p_1^0P_1\otimes U_2 \otimes I_3 \otimes U_4 +p_2^0
I_1\otimes P_2\otimes I_3\otimes I_4+p_3^0I_1\otimes I_2 \otimes
P_3\otimes U_4 +p_4^0I_1\otimes I_2 \otimes I_3\otimes P_4.
$$
On the other hand, there exists no partition $\{1,2,3,4\} =
C\sqcup N$ such that the associated first crested product
coincides with $\mathcal{P}$. In fact, it is not difficult to
check that no labelling satisfying the conditions of Proposition
\ref{vannoacoincidere} can be given to the vertices of this
diagram.
\end{es}

\subsection{$k$-step transition probability}\label{3.3} We provide here
an explicit formula for the $k$-step transition probability. Let
$x=(x_1,\ldots,x_n)$ and $y=(y_1,\ldots,y_n)$ be two elements of
$X$. From \eqref{kpassi} and Proposition \ref{matriciautovettori},
we get
\begin{eqnarray*}
p^{(k)}(x,y)&=&\pi(y)\cdot\sum_{z\in X}\left[
\left(\sum_{S}\prod_{i\in S}(u_i-a_i)(x_i,z_i) \prod_{i\in
A(S)}\delta_{\sigma_i}(x_i,z_i)
\prod_{i\in I\setminus A[S]}a_i(x_i,z_i)\right)\right.\\
 &\times &\lambda_z^k  \left. \left(\sum_{S}\prod_{i\in
S}(u_i-a_i)(y_i,z_i) \prod_{i\in A(S)}\delta_{\sigma_i}(y_i,z_i)
\prod_{i\in I\setminus A[S]}a_i(y_i,z_i)\right)\right].\\
\end{eqnarray*}
This formula becomes simpler by putting $x=0=(0,\ldots,0)$. We get
\begin{eqnarray*}
p^{(k)}(0,y)&=&\pi(y)\cdot\sum_{S}\sum_{\begin{array}{c}\scriptstyle z \textrm{ s.t. }z_i\neq 0, \ i\in S \\
\scriptstyle z_h=0, \ h\not\in S
\end{array}}\left(\frac{\prod_{i\in S}(u_i-a_i)(0,z_i)}{
\prod_{j\in A(S)}\sqrt{\sigma_j(0)}}
\right)\lambda_z^k\\
 &  \times & \left(\sum_{S}\prod_{i\in
S}(u_i-a_i)(y_i,z_i) \prod_{i\in
A(S)}\delta_{\sigma_i}(y_i,z_i)\prod_{i\in I\setminus
A[S]}a_i(y_i,z_i)
\right).\\
\end{eqnarray*}
Note that this expression consists of no more than $ 1+\sum_{S\neq
\emptyset} \prod_{i\in S}(m_i-1)$ terms.

\section{Generalized Insect Markov chain}\label{newinsect}
In this section we describe a Markov chain which is a
generalization of the so called Insect Markov chain introduced in
\cite{insetto} and obtained in \cite{crestednoi} as a particular
case of first crested product. In \cite{noiortogonal}, we extended
it to more general structures called orthogonal block structures.
In particular, we observed that if the orthogonal block structure
is a poset block structure, then the Insect Markov chain can be
defined starting from a finite poset $(I,\leq)$. Our aim is to
define a new Insect Markov chain on some structures starting from
a finite poset $(I,\leq)$, and to check that it can be obtained
from the operator $\mathcal{P}$ defined in Section
\ref{capitoloprincipale} for a particular choice of the
probability distribution $\{p_i^0\}_{i\in I}$ and of the operators $P_i$.\\
\indent Given a finite poset $(I,\leq)$, one can naturally
associate with each antichain $S\in \mathbb{S}$ an ancestral set
$A_S$ by setting $A_S:=I\setminus H[S]$ and it is not difficult to
show that this correspondence is bijective (see
\cite{noiortogonal}). Moreover, we can give to the set of
ancestral subsets of $(I,\leq)$ a natural structure of a poset by
putting $A_S\leq A_{S'}$ if $A_S\supseteq A_{S'}$. In particular,
it is clear that for each $i$ in $I$ the set $\{i\}$ is an
antichain. If we only take antichains constituted by singletons,
the poset of ancestral subsets that we obtain is naturally
isomorphic to $(I,\leq)$. In fact, it is easy to check that
$A_i\supseteq A_j$ if and only if $i\leq j$. We add to this poset
the minimal ancestral set $I$ and denote this poset by
$(I_\mathcal{A},\leq)$. We fix now our attention on the maximal
chains contained in $(I_\mathcal{A},\leq)$. By maximal chain we
mean a chain in $(I_{\mathcal{A}},\leq)$ to which no ancestral set
can be added without losing the property of being totally ordered.

Our aim is to construct a new graph $\mathcal{T}$ obtained by
gluing together some trees arising from the construction we are
performing. Let $X_i=\{0,1,\ldots, m_i-1\}$ and put, as usual,
$X=X_1\times \cdots\times X_n$. For each ancestral set $A\in
(I_\mathcal{A},\leq)$, the ancestral relation $\sim_A$ on $X$ (see
\cite{generalized}) is defined as
$$
x\sim_A y \qquad \mbox{if} \qquad x_i=y_i \qquad \forall \ i\in A.
$$
Observe that the cardinality of an equivalence class of the
relation $\sim_A$ is $\prod_{i\not\in A }m_i$; the cardinality is
1 if $A=I$ (in other words, $\sim_I$ corresponds to the equality
relation). Consider a maximal chain $\{I,A_1,\ldots, A_k\}$ in
$(I_{\mathcal{A}},\leq)$. Start with the set $X$ corresponding to
the relation $\sim_I$ and create a new level corresponding to the
ancestral set $A_1$, in such a way that all elements of $X$ that
are $\sim_{A_1}$-equivalent have a common father in this new
level; iterate this construction for all the ancestral sets in the
chain.\\ \indent For each maximal chain in
$(I_{\mathcal{A}},\leq)$, the arising structure is a disjoint
union of finitely many trees; the final step is to glue together
the structures arising from different maximal chains, by
identifying vertices corresponding to the same ancestral set in
$(I_{\mathcal{A}},\leq)$. We get a graph that we call
$\mathcal{T}$.

\begin{es}\rm
Consider the following simple poset $(I,\leq)$:
\begin{center}
\begin{picture}(400,28)
\put(170,25){\circle*{2}}
\put(150,5){\circle*{2}}\put(190,5){\circle*{2}}
\put(168,27){$1$}\put(140,2){$2$}\put(193,2){$3$}
\put(170,25){\line(-1,-1){20}}\put(170,25){\line(1,-1){20}}
\end{picture}
\end{center}
Since $|I|=3$, the poset contains three antichains which are
singleton: $S_1=\{1\}$, $S_2=\{2\}$ and $S_3=\{3\}$. The
corresponding ancestral sets are $A_1=\emptyset$, $A_2=\{1,3\}$
and $A_3=\{1,2\}$, so that the associated ancestral poset
$(I_{\mathcal{A}},\leq)$ is
\begin{center}
\begin{picture}(400,53)
\put(170,50){\circle*{2}}
\put(150,30){\circle*{2}}\put(190,30){\circle*{2}}
\put(170,10){\circle*{2}}
\put(168,53){$A_1$}\put(136,27){$A_2$}\put(193,27){$A_3$}
\put(168,0){$I$}
\put(170,50){\line(-1,-1){20}}\put(170,50){\line(1,-1){20}}
\put(170,10){\line(-1,1){20}}\put(170,10){\line(1,1){20}}
\end{picture}
\end{center}
Suppose that $X_1=X_2=X_3=\{0,1\}$, so that $X =
\{000,001,010,011,100,101,110,111\}$. The partitions of $X$
corresponding to the ancestral equivalence relations defined by
$A_1,A_2,A_3$ are:
\begin{itemize}
\item $\sim_{A_1} = X$, since $\sim_{A_1}$ is the universal relation;
\item $\sim_{A_2} = \{000,010\} \coprod \{001,011\} \coprod \{100,110\} \coprod
\{101,111\}$;
\item $\sim_{A_3} = \{000,001\} \coprod \{010,011\} \coprod \{100,101\} \coprod
\{110,111\}$.
\end{itemize}
The trees associated with the maximal chains $\{I,A_3,A_1\}$ and
$\{I,A_2,A_1\}$ are, respectively,
\begin{center}
\begin{picture}(400,75)
\put(105,75){\circle*{2}}
\put(45,45){\circle*{2}}\put(85,45){\circle*{2}}
\put(125,45){\circle*{2}}
\put(165,45){\circle*{2}}\put(30,15){\circle*{2}}\put(60,15){\circle*{2}}\put(70,15){\circle*{2}}
\put(100,15){\circle*{2}}
\put(110,15){\circle*{2}}\put(140,15){\circle*{2}}\put(150,15){\circle*{2}}\put(180,15){\circle*{2}}

\put(23,5){000}\put(47,5){001}\put(67,5){010} \put(87,5){011}
\put(107,5){100}\put(128,5){101}\put(147,5){110}\put(172,5){111}

\put(105,75){\line(-2,-1){60}}\put(105,75){\line(-2,-3){20}}
\put(105,75){\line(2,-3){20}}\put(105,75){\line(2,-1){60}}

\put(45,45){\line(-1,-2){15}}\put(45,45){\line(1,-2){15}}
\put(85,45){\line(-1,-2){15}}\put(85,45){\line(1,-2){15}}\put(125,45){\line(-1,-2){15}}
\put(125,45){\line(1,-2){15}}\put(165,45){\line(-1,-2){15}}\put(165,45){\line(1,-2){15}}

\put(285,75){\circle*{2}}\put(225,45){\circle*{2}}\put(265,45){\circle*{2}}
\put(305,45){\circle*{2}}
\put(345,45){\circle*{2}}\put(210,15){\circle*{2}}\put(240,15){\circle*{2}}\put(250,15){\circle*{2}}
\put(280,15){\circle*{2}}
\put(290,15){\circle*{2}}\put(320,15){\circle*{2}}\put(330,15){\circle*{2}}\put(360,15){\circle*{2}}

\put(203,5){000}\put(227,5){001}\put(247,5){010} \put(267,5){011}
\put(287,5){100}\put(308,5){101}\put(327,5){110}\put(352,5){111}

\put(285,75){\line(-2,-1){60}}\put(285,75){\line(-2,-3){20}}
\put(285,75){\line(2,-3){20}}\put(285,75){\line(2,-1){60}}

\put(225,45){\line(-1,-2){15}}\put(225,45){\line(5,-6){25}}
\put(265,45){\line(-5,-6){25}}\put(265,45){\line(1,-2){15}}
\put(305,45){\line(-1,-2){15}}\put(305,45){\line(5,-6){25}}
\put(345,45){\line(-5,-6){25}}\put(345,45){\line(1,-2){15}}
\end{picture}
\end{center}
Finally, the graph $\mathcal{T}$ obtained by gluing them is
\begin{center}
\begin{picture}(400,125)
\put(195,120){\circle*{2}}\put(75,80){\circle*{2}}
\put(95,80){\circle*{2}}
\put(135,80){\circle*{2}}\put(155,80){\circle*{2}}
\put(235,80){\circle*{2}}\put(255,80){\circle*{2}}
\put(295,80){\circle*{2}} \put(315,80){\circle*{2}}
\put(55,40){\circle*{2}}\put(95,40){\circle*{2}}\put(135,40){\circle*{2}}
\put(175,40){\circle*{2}}\put(255,40){\circle*{2}}\put(295,40){\circle*{2}}
\put(215,40){\circle*{2}}\put(335,40){\circle*{2}}

\put(48,30){000}\put(88,30){001}\put(128,30){010}
\put(168,30){011}\put(248,30){101}\put(288,30){110}
\put(208,30){100}\put(328,30){111}

\put(195,120){\line(-3,-1){120}}
\put(195,120){\line(-5,-2){100}}\put(195,120){\line(-3,-2){60}}
\put(195,120){\line(-1,-1){40}}

\put(195,120){\line(3,-1){120}}\put(195,120){\line(5,-2){100}}
\put(195,120){\line(3,-2){60}}\put(195,120){\line(1,-1){40}}
\put(75,80){\line(-1,-2){20}}
\put(75,80){\line(1,-2){20}}\put(95,80){\line(-1,-1){40}}
\put(95,80){\line(1,-1){40}}\put(135,80){\line(-1,-1){40}}
\put(135,80){\line(1,-1){40}}\put(155,80){\line(-1,-2){20}}
\put(155,80){\line(1,-2){20}}

\put(235,80){\line(-1,-2){20}}
\put(235,80){\line(1,-2){20}}\put(255,80){\line(-1,-1){40}}
\put(255,80){\line(1,-1){40}}\put(295,80){\line(-1,-1){40}}
\put(295,80){\line(1,-1){40}}\put(315,80){\line(-1,-2){20}}
\put(315,80){\line(1,-2){20}}
\end{picture}
\end{center}
\end{es}
\begin{os}\rm Compare the graph $\mathcal{T}$ with the more
complicated poset in \cite{noiortogonal}, Figure 3, that can be
obtained by considering all the antichains of $(I,\leq)$. However,
Theorem \ref{spettraleinsetonegenerale} implies that the spectral
decompositions coincide in the two constructions.
\end{os}
The \textbf{generalized Insect Markov chain} is the Markov chain
on $X$ obtained by thinking of an insect performing a simple
random walk on $\mathcal{T}$. Starting from an element in $X$
(naturally identified with the bottom level via the identity
relation), the next stopping time is when another vertex in $X$ is
reached by the insect in the simple random walk on $\mathcal{T}$.
In order to describe this Markov chain we introduce some notations
and useful coefficients having a probabilistic meaning.

Observe that moving to an upper level in $\mathcal{T}$ means to
pass in $(I_{\mathcal{A}},\leq)$ from the ancestral set $A_i$ to
an ancestral set $A_j$ such that $A_i\lhd A_j$, where $\lhd$ means
that there is no ancestral set between $A_i$ and $A_j$ (we have
$|\{A_k\in I_{ \mathcal{A}}:A_i\lhd A_k\}|$ possibilities), moving
to a lower level in $\mathcal{T}$ from the ancestral set $A_j$
means to pass to an ancestral set $A_i$ such that $A_i\lhd A_j$
(these are $\sum_{A_i\in I_{\mathcal{A}}:A_i\lhd A_j}\prod_{k\in
A_i\setminus A_j}m_k$ possibilities).

Let $A_i\lhd A_j$ and let $\alpha_{i,j}$ be the probability of
moving from the ancestral $A_i$ to the ancestral $A_j$. The
following relation is satisfied:
\begin{eqnarray}\label{alpha}
\alpha_{i,j} &=&    \frac{1}{\sum_{A_k\in I_{\mathcal{A}}: A_k\lhd
A_i}\prod_{h\in A_k\setminus
A_i}m_h+|\{A_l\in I_{\mathcal{A}}:A_i\lhd A_l\}|} \\
&+& \sum_{A_k\in I_{\mathcal{A}}: A_k\lhd A_i}\frac{\prod_{h\in
A_k\setminus A_i}m_h}{ {\sum_{A_k\in I_{\mathcal{A}}: A_k\lhd
A_i}\prod_{h\in A_k\setminus A_i}m_h+|\{A_l\in
I_{\mathcal{A}}:A_i\lhd A_l\}|}}\alpha_{k,i}\alpha_{i,j}.\nonumber
\end{eqnarray}
In fact, the insect can directly pass from $A_i$ to $A_j$ with
probability $\alpha_{i,j}$ or go down to any $A_k$ such that
$A_k\lhd A_i$, and then come back to $A_i$ with probability
$\alpha_{k,i}$, and one starts the recursive argument. From direct
computations, one gets
\begin{eqnarray*}
\alpha_{I,i} = \frac{1}{|\{A_k\in I_{\mathcal{A}}: I \lhd A_k\}|},
\end{eqnarray*}
Moreover, if $\alpha_{I,i} =1$ we have, for all $A_j$ such that
$A_i\lhd A_j$,
\begin{eqnarray*}
\alpha_{i,j} = \frac{1}{\sum_{A_k\in I_{\mathcal{A}}: A_k\lhd
A_i}\prod_{h\in A_k\setminus A_i}m_h+|\{A_l\in
I_{\mathcal{A}}:A_i\lhd A_l\}|}.
\end{eqnarray*}
If $\alpha_{I,i}\neq 1$, the coefficient $\alpha_{i,j}$ is defined
as in (\ref{alpha}).\\
\\
Set
\begin{eqnarray}\label{coefficienti}
p_i = \sum_{\begin{array}{c}\scriptstyle C\subseteq I_{\mathcal{A}} \ \textrm{chain} \\
\scriptstyle C = \{I,A_j,\ldots,A_k,A_i\}
\end{array}}\alpha_{I,j}\cdots \alpha_{k,i}\left(1-\sum_{A_i\lhd
A_l}\alpha_{i,l}\right)
\end{eqnarray}
and observe that $p_i$ expresses the probability of reaching the
ancestral $A_i$ but not $A_l$ such that $A_i\lhd A_l$ in
$(I_{\mathcal{A}},\leq)$. Moreover, we put
\begin{eqnarray}\label{coefficientiginevra}
p_i = \sum_{\begin{array}{c}\scriptstyle C\subseteq I_{\mathcal{A}} \ \textrm{chain} \\
\scriptstyle C = \{I,A_j,\ldots,A_k,A_i\}
\end{array}}\alpha_{I,j}\cdots \alpha_{k,i}
\end{eqnarray}
if $A_i$ is a maximal element of $(I_{\mathcal{A}},\leq)$.
\begin{prop}
Let $(I,\leq)$ be a finite poset. The generalized Insect Markov
chain coincides with the generalized crested product $\mathcal{P}$
where the coefficients $p_i^0$ are chosen as in
(\ref{coefficienti}) (or (\ref{coefficientiginevra})) and the
operators $P_i$ are the uniform operators $J_i$.
\end{prop}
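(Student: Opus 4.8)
The plan is to realize both operators as stochastic matrices on $X$ and to check that they have the same entries. First I would specialize the generalized crested product to $P_i=J_i$. Substituting into \eqref{definizioneiniziale} — whose factors indexed by $H(i)$ are uniform operators — and using $J_i\otimes\bigl(\bigotimes_{j\in H(i)}J_j\bigr)=\bigotimes_{j\in H[i]}J_j$ gives
\[
\mathcal{P}=\sum_{i\in I}p_i^0\,\Bigl(\bigotimes_{j\in H[i]}J_j\Bigr)\otimes\Bigl(\bigotimes_{j\in I\setminus H[i]}I_j\Bigr),
\]
so, recalling that $A_i=I\setminus H[i]$ and that each $\sim_{A_i}$-class has $\prod_{j\in H[i]}m_j$ elements,
\[
\mathcal{P}(x,y)=\sum_{i\in I}p_i^0\,\frac{\prod_{j\in I\setminus H[i]}\delta_j(x_j,y_j)}{\prod_{j\in H[i]}m_j}=\sum_{\substack{i\in I\\ x\sim_{A_i}y}}\frac{p_i^0}{\prod_{j\in H[i]}m_j}.
\]

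Next I would compute the transition matrix of the generalized Insect Markov chain. Run the simple random walk on $\mathcal{T}$ from $x\in X$ until it first returns to the bottom level, and decompose this excursion according to which ancestral set $A_i$ it reaches without ever climbing to a strict ancestor of the corresponding vertex; these events partition the probability space (as recorded just before the statement), and the vertex in question is the fibre $[x]_{\sim_{A_i}}$. Two points must be verified. First, the bottom vertices of $\mathcal{T}$ lying below $[x]_{\sim_{A_i}}$ are exactly the elements of the $\sim_{A_i}$-class of $x$, a set of size $\prod_{j\in H[i]}m_j$: this is immediate since a covering step downward in $I_{\mathcal{A}}$ refines the ancestral partition, the class sizes multiplying consistently. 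Second, conditioned on the excursion reaching $[x]_{\sim_{A_i}}$ but no strict ancestor of it, the returning vertex $y$ is uniform over that $\sim_{A_i}$-class; this is where the homogeneity of $\mathcal{T}$ is used — via automorphisms of $\mathcal{T}$ fixing $[x]_{\sim_{A_i}}$ together with all levels above it and acting transitively on its fibre. Finally, iterating the one-step recursion \eqref{alpha} — which is precisely how the $\alpha_{i,j}$, and hence the coefficients in \eqref{coefficienti}, were built — shows that the probability of the excursion reaching exactly $A_i$ equals
\[
\sum_{\substack{C\subseteq I_{\mathcal{A}}\ \text{chain}\\ C=\{I,A_j,\dots,A_k,A_i\}}}\alpha_{I,j}\cdots\alpha_{k,i}\Bigl(1-\sum_{A_i\lhd A_l}\alpha_{i,l}\Bigr)=p_i,
\]
the last factor being $1$ — so that $p_i$ reduces to \eqref{coefficientiginevra} — exactly when $A_i$ is maximal in $I_{\mathcal{A}}$ and there is no strict ancestor to avoid.

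Combining the two observations about the excursion,
\[
p_{\mathrm{Insect}}(x,y)=\sum_{i\in I}p_i\cdot\frac{\mathbf{1}[\,x\sim_{A_i}y\,]}{\prod_{j\in H[i]}m_j},
\]
which coincides with the expression for $\mathcal{P}(x,y)$ obtained above once $p_i^0=p_i$, and this proves the proposition. I expect the only genuine obstacle to be the conditional-uniformity claim: it requires a careful description of the glued-tree graph $\mathcal{T}$ and of which of its symmetries persist after the walk is conditioned to stay below a prescribed vertex. By contrast the first step is routine tensor-product bookkeeping, and the identification of $p_i$ is a direct transcription of the recursive definitions stated immediately before the proposition.
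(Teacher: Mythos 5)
Your overall plan is the same as the paper's: with $P_i=J_i$ the operator \eqref{definizioneiniziale} becomes $\sum_{i\in I}p_i^0\bigl(\bigotimes_{j\in H[i]}J_j\bigr)\otimes\bigl(\bigotimes_{j\in I\setminus H[i]}I_j\bigr)$, and one matches the $i$-th summand, entrywise, with the contribution of the ancestral set $A_i=I\setminus H[i]$ to the insect kernel, namely $p_i/\prod_{j\in H[i]}m_j$ spread over the $\sim_{A_i}$-class of $x$; this first step of yours is correct and is exactly the paper's termwise identification. The paper, however, justifies the insect-side formula only with the words ``by construction'', so everything your write-up adds is the probabilistic bridge, and that bridge has genuine gaps.

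First, the events ``the excursion reaches $[x]_{A_i}$ without ever climbing to a strict ancestor of that vertex'' do \emph{not} partition the sample space, and nothing recorded before the statement asserts that they do (the paper only gives the verbal gloss on $p_i$). Take $I=\{1,2,3\}$ with $2<1$ and $2<3$: then $I\lhd A_2$, $A_2\lhd A_1$, $A_2\lhd A_3$ with $A_1,A_3$ incomparable and maximal, and the excursion $x\to[x]_{A_2}\to[x]_{A_1}\to[x]_{A_2}\to[x]_{A_3}\to\cdots$ lies in both the $A_1$-event and the $A_3$-event. Second, the conditional-uniformity claim is not established: the automorphisms you invoke fix $[x]_{A_i}$ and the levels above it but do not fix the starting vertex $x$, so they do not preserve the law of the conditioned excursion; moreover, after the walk goes up through one incomparable cover and comes down through that vertex's other children it need no longer sit above $x$, so the class it finally exits from need not contain $x$ at all. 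Third, identifying the event probabilities with \eqref{coefficienti} ``by iterating \eqref{alpha}'' inherits the same defect: the recursion only accounts for leaving level $A_i$ through a lower cover and returning, it ignores excursions over an incomparable upper cover (which return to a \emph{different} vertex of the same level), and, read literally, it allows returns from the bottom level although the insect stops there. These are not cosmetic points: in the example above the walk can, within a single excursion (under the natural reading of $\mathcal{T}$ and of the stopping rule, which you also adopt), visit both $[x]_{A_1}$ and $[x]_{A_3}$ and exit at a $y$ with $y_1\neq x_1$ and $y_3\neq x_3$, whereas each of the three summands of $\mathcal{P}$ for this poset ($J_1\otimes J_2\otimes I_3$, $I_1\otimes J_2\otimes I_3$, $I_1\otimes J_2\otimes J_3$) fixes the first or the third coordinate, so $\mathcal{P}(x,y)=0$ for such $y$; hence no refinement of the proposed symmetry argument can close the gap for such posets. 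Your argument can be completed along the classical lines you sketch (and then it is essentially the paper's proof with details filled in) when every element of $I$ has at most one cover above it --- e.g.\ for chains, where one recovers the classical Insect chain, and for the paper's worked example $2<1>3$ --- but as written it does not establish the statement for an arbitrary finite poset, and the problematic steps are precisely the ones you defer.
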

\begin{proof}
Let $x,y \in X$ and  let $p$ be the transition probability of the
generalized Insect Markov chain. By construction of the poset
$(I_{\mathcal{A}},\leq)$, we have
\begin{eqnarray*}
p(x,y)= \sum_{\begin{array}{c}\scriptstyle  A_i\in I_{\mathcal{A}}, \\
\scriptstyle x\sim_{A_i} y
\end{array}}\sum_{\begin{array}{c}\scriptstyle
C\subseteq I_{\mathcal{A}} \ \textrm{chain} \\
\scriptstyle C = \{I,A_j,\ldots,A_k,A_i\}
\end{array}}\frac{\alpha_{I,j}\cdots \alpha_{k,i}\left(1-\sum_{A_i\lhd
A_l}\alpha_{i,l}\right)}{\prod_{h\not\in A_i}m_h}.
\end{eqnarray*}
The summand corresponding to $A_i$ can be represented as
$$
p_i\cdot\left(\bigotimes_{j\in I\setminus H[i]}
I_j(x_j,y_j)\right)\otimes\left(\bigotimes_{j\in H[i]}
J_j(x_j,y_j)\right),
$$
which is the $i$-th term of the operator $\mathcal{P}$, where
$p_i^0$ is chosen as in (\ref{coefficienti}) (or
(\ref{coefficientiginevra})) and $P_i=J_i$.
\end{proof}

If $P_i=J_i$, then the spectral decomposition of $L(X_i)$ is
$L(X_i)=V^i_0\oplus V^i_1$, with $V^i_1=\{f\in L(X_i) :
\sum_{x_i\in X_i}f(x_i)=0\}$. Theorem \ref{decomposizione} implies
the following result.
\begin{teo}\label{spettraleinsetonegenerale}
Let $(I,\leq)$ be a finite poset, $\mathbb{S}$ be the set
antichains and $p_i$ be as in (\ref{coefficienti}) (or
(\ref{coefficientiginevra})). Let $P$ be the corresponding
generalized Insect Markov chain on $X$. Then the eigenspaces of
$P$ are
$$
W_S=\left(\bigotimes_{i\in
S}V_1^i\right)\otimes\left(\bigotimes_{i\in
A(S)}L(X_i)\right)\otimes\left(\bigotimes_{i\in I\setminus
A[S]}V_0^i\right), \qquad \mbox{for each } S\in \mathbb{S},
$$
with associated eigenvalue
$$
\lambda_S=\sum_{i\in I\setminus A[S]} p_i.
$$
\end{teo}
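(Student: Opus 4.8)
The plan is to obtain the statement as a direct specialization of Theorem \ref{decomposizione}. By the preceding proposition, the generalized Insect Markov chain $P$ is the generalized crested product $\mathcal{P}$ in which every $P_i$ is the uniform operator $J_i$ and the probability distribution is $p_i^0 = p_i$ as in \eqref{coefficienti} (or \eqref{coefficientiginevra}); so it suffices to read off what Theorem \ref{decomposizione} says in this case.

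First I would record the spectral data of a single $J_i$. Since $J_i^2 = J_i$, the only eigenvalues of $J_i$ are $1$ and $0$, with eigenspaces $V_0^i$ (the constant functions, one-dimensional by irreducibility) and $V_1^i = \{f \in L(X_i) : \sum_{x_i \in X_i} f(x_i) = 0\}$, as already noted in the text preceding the statement. In the notation of Section \ref{3.2} this means $r_i = 1$, $\lambda_0 = 1$ and $\lambda_1 = 0$ for every $i \in I$.

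Next I would substitute these data into Theorem \ref{decomposizione}. For an antichain $S = \{i_1, \ldots, i_k\} \in \mathbb{S}$ the set $\mathcal{J}_S = \{(j_{i_1}, \ldots, j_{i_k}) : j_{i_h} \in \{1, \ldots, r_{i_h}\}\}$ reduces to the single multi-index $(1, \ldots, 1)$, so the inner direct sum over $\underline{j} \in \mathcal{J}_S$ disappears and $V_{S,\underline{j}} = \bigotimes_{i \in S} V_1^i$. The space $W_{S,\underline{j}}$ of \eqref{autospazihaifa} then becomes exactly the $W_S$ of the statement, and $L(X) = \bigoplus_{S \in \mathbb{S}} W_S$ is precisely the conclusion of Theorem \ref{decomposizione}. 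For the eigenvalue, formula \eqref{formula7} gives $\lambda_{S,\underline{j}} = \sum_{h=1}^k p_{i_h}^0 \lambda_{j_{i_h}} + \sum_{i \in I \setminus A[S]} p_i^0$; since every $\lambda_{j_{i_h}}$ equals $\lambda_1 = 0$, the first sum vanishes, and writing $p_i^0 = p_i$ leaves $\lambda_S = \sum_{i \in I \setminus A[S]} p_i$, which is the asserted formula.

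The one point I would still check is that Theorem \ref{decomposizione} genuinely applies to this chain: one needs each $J_i$ to be an irreducible (and, trivially, reversible) Markov operator, which is clear since all its entries are positive and it is in detailed balance with the uniform measure on $X_i$, and one needs $\{p_i\}_{i \in I}$ to be a strict probability distribution on $I$. The latter is the content of the probabilistic meaning of the $p_i$ recorded above and in \eqref{coefficienti}: each $p_i$ is the probability that the insect, started on the bottom level $X$, first returns to $X$ through the equivalence class of the ancestral set $A_i$, so that $\sum_{i \in I} p_i = 1$ because the simple random walk on the finite graph $\mathcal{T}$ returns to $X$ almost surely, and $p_i > 0$ because every $A_i$ is reachable. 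I do not expect a genuine obstacle: the whole theorem is the $P_i = J_i$ instance of Theorem \ref{decomposizione}, and the rest is the bookkeeping just indicated.
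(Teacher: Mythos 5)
Your proposal is correct and follows exactly the paper's route: the paper also obtains this theorem by specializing Theorem \ref{decomposizione} to the case $P_i=J_i$ (via the preceding proposition identifying the Insect chain with the generalized crested product for $p_i^0=p_i$), where $J_i$ has eigenvalue $1$ on $V_0^i$ and $0$ on $V_1^i$, so the first sum in \eqref{formula7} vanishes. Your additional checks (irreducibility of $J_i$ and that $\{p_i\}$ is a strict probability distribution) are sound bookkeeping that the paper leaves implicit.
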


Recall that two posets $(I,\leq)$ and $(J,\preceq)$ are isomorphic
if there exists an \textit{order-preserving} bijection
$\varphi:I\longrightarrow J$, i.e.
$$
x\leq y \qquad \Longleftrightarrow
\qquad\varphi(x)\preceq\varphi(y) \qquad \mbox{for all }x,y \in I.
$$
Now let $S,S'$ be two antichains of $(I,\leq)$. It is easy to
verify that, if there exists an automorphism $\varphi$ of the
poset $(I,\leq)$ such that $\varphi(S)=S'$, then
\begin{eqnarray}\label{eigenvaluesequal}
\lambda_S=\lambda_{S'}.
\end{eqnarray}
In fact, if this is the case, then $i\in I\setminus A[S]$ if and
only if $\varphi(i)\in I\setminus A[S']$. Moreover, one has $p_{i}
= p_{\varphi(i)}$ for each $i\in I\setminus A[S]$, since $\varphi$
is order-preserving and \eqref{eigenvaluesequal} follows. On the
other hand, the existence of such an automorphism is not a
necessary condition in order to have \eqref{eigenvaluesequal}, as
the following example shows.
\begin{es}\rm Consider the poset $(I,\leq)$ and the associated ancestral poset $(I_{\mathcal{A}},\leq)$ in the
pictures below.
\begin{center}
\begin{picture}(400,95)
\put(80,90){\circle*{2}} \put(120,90){\circle*{2}}
\put(80,70){\circle*{2}} \put(100,70){\circle*{2}}
\put(140,70){\circle*{2}}\put(80,50){\circle*{2}}\put(120,50){\circle*{2}}\put(100,30){\circle*{2}}
\put(280,90){\circle*{2}}
\put(320,90){\circle*{2}}\put(280,70){\circle*{2}}\put(300,70){\circle*{2}}\put(340,70){\circle*{2}}
\put(280,50){\circle*{2}}
\put(320,50){\circle*{2}}\put(300,30){\circle*{2}}\put(300,10){\circle*{2}}

\put(72,85){$1$}\put(72,67){$2$}\put(72,45){$3$} \put(118,93){$4$}
\put(93,67){$5$}\put(143,65){$6$}\put(123,45){$7$}\put(98,20){$8$}
\put(267,85){$A_1$}\put(267,67){$A_2$}\put(267,45){$A_3$}
\put(317,93){$A_4$}
\put(287,67){$A_5$}\put(341,65){$A_6$}\put(321,45){$A_7$}\put(302,25){$A_8$}\put(297,0){$I$}

\put(80,90){\line(0,-1){20}}\put(80,70){\line(0,-1){20}}
\put(80,50){\line(1,-1){20}}\put(120,90){\line(-1,-1){20}}
\put(120,90){\line(1,-1){20}}\put(100,70){\line(1,-1){20}}
\put(140,70){\line(-1,-1){20}}\put(120,50){\line(-1,-1){20}}

\put(280,90){\line(0,-1){20}}\put(280,70){\line(0,-1){20}}
\put(280,50){\line(1,-1){20}}\put(320,90){\line(-1,-1){20}}
\put(320,90){\line(1,-1){20}}\put(300,70){\line(1,-1){20}}
\put(340,70){\line(-1,-1){20}}\put(320,50){\line(-1,-1){20}}
\put(300,30){\line(0,-1){20}}
\end{picture}
\end{center}
Suppose that $|X_i|=m$ for each $i=1,\ldots, 8$. Then it is not
difficult to prove that:
\begin{enumerate}
\item $\lambda_{\{3\}} = \lambda_{\{7\}}$;
\item $\lambda_{\{3,5\}} = \lambda_{\{3,6\}}$;
\item $\lambda_{\{2,5\}} = \lambda_{\{2,6\}}$;
\item $\lambda_{\{1,5\}} = \lambda_{\{1,6\}}$.
\end{enumerate}
Observe that the antichains $\{3\}$ and $\{7\}$ verify
$\lambda_{\{3\}} = \lambda_{\{7\}}$ but there exists no
automorphism of $(I,\leq)$ mapping $\{3\}$ to $\{7\}$.
\end{es}

\section{Gelfand Pairs}\label{sectiongelfand}
Consider the generalized crested product defined in
(\ref{definizioneiniziale}) obtained by choosing $P_i=J_i$ for
each $i\in I$. We know that in this case one has
$$
L(X_i) = V^i_0 \oplus V^i_1 \qquad \mbox{ for each }i\in I,
$$
where $V^i_0\cong \mathbb{C}$ is the space of constant functions
on $X_i$ and $V^i_1=\{f\in L(X_i) : \sum_{x_i\in X_i}f(x_i)=0\}$.
Hence, $\dim (V^i_0) = 1$ and
$\dim (V^i_1) = m_i-1$.\\
\indent In \cite{crestednoi} we made the following remarks: for
the crossed product, the eigenspaces of the operator coincide with
the irreducible submodules of the representation of the direct
product $Sym(m_1)\times \cdots \times Sym(m_n)$ over $L(X_1\times
\cdots \times X_n)$; for the nested product, the eigenspaces of
the operator coincide with the irreducible submodules of the
representation of the wreath product $Sym(m_n)\wr \cdots \wr
Sym(m_1)$ over $L(X_1\times \cdots \times X_n)$.\\ \indent It is
natural to ask if such a correspondence can be extended to the
general case. Actually, the answer is positive and it is given by
a family of groups containing, as particular cases, both the
direct product and the wreath product of permutation groups. These
groups are the so called {\bf generalized wreath product},
introduced in \cite{generalized} as permutation groups of the so
called poset block structures.\\ \indent In \cite{generalized} it
is proven that, given $n$ finite spaces $X_i$, indexed by the
elements of a finite poset $(I,\leq)$, the action of the
generalized wreath product of the permutation groups $Sym(X_i)$ on
the space $L(X_1\times \cdots \times X_n)$ has the following
decomposition into irreducible submodules:
$$
L(X_1\times \cdots \times X_n) = \bigoplus_{S\subseteq I \
\textrm{antichain}}W_S,
$$
with
\begin{eqnarray*}
W_S=\left(\bigotimes_{i\in A(S)}L(X_i)\right)\otimes
\left(\bigotimes_{i\in S}V^i_1\right)\otimes\left(\bigotimes_{i\in
I\setminus A[S]}V^i_0\right).
\end{eqnarray*}
These irreducible submodules coincide with the eigenspaces that we
described in
(\ref{autospazihaifa}) if $P_i=J_i$ and this answers our question.\\
\\
Finally, we proved in \cite{noiortogonal} that the action of the
generalized wreath product of the groups $Sym(X_i)$ on
$L(X_1\times \cdots \times X_n)$ yields symmetric Gelfand pairs
(see \cite{librotullio} or \cite{estate} for the definition) when
one considers the subgroup stabilizing a given element $x_0 =
(x_0^1,\dots, x^n_0)\in X_1\times \cdots \times X_n$. Moreover,
the spherical function associated with $W_S$ is
\begin{eqnarray}\label{spherical}
\phi_S= \bigotimes_{i\in A(S)}\varphi_i \bigotimes_{i\in S}\psi_i
\bigotimes_{i\in I\setminus A[S]}\varrho_i,
\end{eqnarray}
where $\varphi_i,\psi_i\in L(X_i)$ are defined as
$$
\varphi_i(x) =
  \begin{cases}
    1 & x = x^i_0 \\
    0 & \text{otherwise}
  \end{cases},
\qquad \psi_i(x) =
  \begin{cases}
    1 & x = x^i_0 \\
    -\frac{1}{m_i-1} & \text{otherwise}
  \end{cases}
$$
and $\varrho_i\in L(X_i)$ satisfies $\varrho_i(x_i) = 1$ for every
$x_i \in X_i$.

\end{document}